\newcommand\cyr{%
\renewcommand\rmdefault{wncyr}%
\renewcommand\sfdefault{wncyss}%
\renewcommand\encodingdefault{OT2}%
\normalfont
\selectfont}
\DeclareTextFontCommand{\textcyr}{\cyr}
\DeclareFontFamily{OT1}{rsfs}{}
\DeclareFontShape{OT1}{rsfs}{n}{it}{<-> rsfs10}{}
\DeclareMathAlphabet{\mathscr}{OT1}{rsfs}{n}{it}
\numberwithin{equation}{section}
\newtheorem{theorem}{Theorem}[section]
\newtheorem{lemma}[theorem]{Lemma}
\newtheorem{proposition}[theorem]{Proposition}
\newtheorem{question}{Question}
\newtheorem*{maintheorem}{Main Theorem}
\theoremstyle{definition}
\newtheorem{definition}[theorem]{Definition}
\newtheorem{remark}[theorem]{Remark}
\theoremstyle{remark}
\newtheorem{example}[theorem]{Example}
\newtheorem{acknowledgement}{Acknowledgement}
\renewcommand{\ker}{\operatorname{Ker}}
\newcommand{\fm}{\frak{m}}
\newcommand{\fp}{\frak{p}}
\newcommand{\fq}{\frak{q}}
\newcommand{\fa}{\frak{a}}
\begin{document}
\title[Tight closure of parameter ideals]
{Tight closure of parameter ideals in local rings $F$-rational on the punctured spectrum}

\author[P. H. Quy]{Pham Hung Quy}
\address{Department of Mathematics, FPT University, Hoa Lac Hi-Tech Park, Hanoi, Vietnam}
\email{quyph@fe.edu.vn}

\thanks{2010 {\em Mathematics Subject Classification\/}: 13A35, 13D45, 13H10.\\
The author is partially supported by a fund of Vietnam National Foundation for Science
and Technology Development (NAFOSTED) under grant number
101.04-2017.10.}
\keywords{$F$-injective ring, $F$-rational rings, Frobenius closure, tight closure, Buchsbaum rings generalized Cohen-Macaulay rings, local cohomology.}


\begin{abstract} Let $(R, \fm, k)$ be an equidimensional excellent local ring of characteristic $p>0$. The aim of this paper is to show that $\ell_R(\fq^*/\fq)$ does not depend on the choice of parameter ideal $\fq$ provided $R$ is an $F$-injective local ring that is $F$-rational on the punctured spectrum.
\end{abstract}

\maketitle

\section{Introduction}
Throughout this paper, let $(R, \fm, k)$ be a local ring of dimension $d$ and $\fq$ a parameter ideal of $R$. The motivation of this paper comes from the theory of Buchsbaum rings. Recall that the length $\ell_R (R/\fq)$ is always greater than or equal to the multiplicity $e(\fq)$ for all parameter ideals $\fq$. Furthermore, $R$ is Cohen-Macaulay if and only if $\ell_R (R/\fq) = e(\fq)$ for some (and hence for all) $\fq$. The ring $R$ is called {\it generalized Cohen-Macaulay} if the difference $\ell_R (R/\fq) - e(\fq)$ is bound above for all $\fq$. More precisely, if $R$ is generalized Cohen-Macaulay then
$$\ell_R (R/\fq) - e(\fq) \le \sum_{i=0}^{d-1} \binom{d-1}{i} \ell_R(H^i_{\fm}(R))$$
for all parameter ideals $\fq$, and the equality occurs for all parameter ideals $\fq$ contained in a large enough power of $\fm$. The ring $R$ is said to be {\it Buchsbaum} if  $\ell_R (R/\fq) - e(\fq)$ does not depend on the choice of parameter ideal $\fq$.\\

Suppose $(R, \fm, k)$ is an equidimensional excellent local ring of characteristic $p>0$. A classical result of Kunz \cite{Ku69} says that $R$ is regular if and only if the Frobenius endomorphism $F: R \to R, x \mapsto x^p$ is flat. Kunz's theorem is the starting point to study the singularities of $R$ in terms of Frobenius homomorphism, say {\it $F$-singularities}. $F$-singularities appear in the theory of \textit{tight closure} (cf. \cite{H96} for its introduction), which was systematically introduced by Hochster and Huneke around the mid 80's \cite{HH90}. The main objects of $F$-singularities are $F$-regularity, $F$-rationality, $F$-purity, and $F$-injectivity. Recall that $R$ is said to be {\it $F$-rational} if $\fq^* = \fq$ for all parameter ideals $\fq$, where $\fq^*$ is the tight closure of $\fq$. It should be noted that if $R$ is $F$-rational then it is Cohen-Macaulay and normal (here we assume that $R$ is excellent and equidimensional).
In \cite{GN02} Goto and Nakamura considered rings satisfying that $\ell_R (\fq^*/\fq)$ is bounded above. They proved that $\ell_R (\fq^*/\fq)$ is bounded above for every parameter ideal $\fq$ if and only if $R$ is $F$-rational on the punctured spectrum $\mathrm{Spec}^{\circ}(R) = \mathrm{Spec}(R) \setminus \{\fm \}$. It is worth to noting that these conditions imply the generalized Cohen-Macaulay property of $R$. In the present paper, we study the counterpart of Buchsbaum rings in the $F$-singularities realm. \\

Recall that the Frobenius endomorphism yields the (natural) Frobenius action on local cohomology $F: H^i_{\fm}(R) \to H^i_{\fm}(R)$ for all $i \ge 0$. We say $R$ is {\it $F$-injective} if the Frobenius action on all local cohomology modules $H^i_{\fm}(R)$ are injective (cf. \cite{F83}). Ma \cite{M15} showed that if an $F$-injective ring is generalized Cohen-Macaulay, then it is Buchsbaum. Therefore if $R$ is an $F$-injective local ring that is $F$-rational on the punctured spectrum, it is Buchsbaum. As the main result of this paper, we prove the following.
\begin{maintheorem}
  Let $(R, \fm, k)$ be an equidimensional excellent local ring of characteristic $p>0$. Suppose $R$ is an $F$-injective local ring that is $F$-rational on the punctured spectrum. Then $\ell_R(\fq^*/\fq)$ does not depend on the choice of parameter ideal $\fq$.
\end{maintheorem}
The paper is organized as follows. In the next section we collect results of generalized Cohen-Macaulay rings and of $F$-singularities used in this paper. Section 3 is devoted for a proof of the main theorem in the $F$-finite case. We prove the main theorem for any equidimensional excellent local ring in Section 4. In the last section we state several open questions for future research.
\begin{acknowledgement}
The author is deeply grateful to Linquan Ma for his discussion about the $\Gamma$-construction. He is also grateful to Nguyen Cong Minh and Kei-ichi Watanabe for their useful comments on this work. The author is grateful to the referee for his/her careful reading and useful comments.
\end{acknowledgement}

\section{Preliminary}

\subsection{Buchsbaum and generalized Cohen-Macaulay modules}

Let us recall the definition of Buchsbaum and generalized Cohen-Macaulay modules (cf. \cite{SV86,Tr86}). Let $M$ be a finitely generated module over a local ring $(R,\fm,k)$ and let $\fq$ be a parameter ideal of $M$. We denote by $e(\fq,M)$ the multiplicity of $M$ with respect to $\fq$ (cf. \cite{BH98} for details).

\begin{definition}
\label{nCM1}
Let $M$ be a finitely generated module over a Noetherian local ring $(R,\fm,k)$ such that $t=\dim M>0$. Then $M$ is called \textit{generalized Cohen-Macaulay}, if the difference
$$
\ell_R (M/\fq M)-e(\fq,M)
$$
is bounded above, where $\fq$ ranges over the set of all parameter ideals of $M$.
\end{definition}

\begin{remark}
Let the notation be as in Definition \ref{nCM1}.
\begin{enumerate}
\item It is well-known that $M$ is Cohen-Macaulay if and only if $H^i_{\fm}(M) = 0$ for all $i<t$. Similarly, $M$ is generalized Cohen-Macaulay if and only if $H^i_{\fm}(M)$ is a finitely generated $R$-module for all $i < t$.

\item
Suppose $R$ is equidimensional and is an image of a Cohen-Macaulay local ring. Then $M$ is generalized Cohen-Macaulay if and only if the non-Cohen-Macaulay locus of $M$ is isolated.

\item
Let $M$ be a generalized Cohen-Macaulay $R$-module over $(R,\fm,k)$ such that $t=\dim M>0$. Then
$$
\ell_R (M/\fq M)-e(\fq, M) \le \sum_{i=0}^{t-1} \binom{t-1}{i} \ell_R(H^i_{\fm}(M))
$$
for every parameter ideal $\fq$ of $M$.
\end{enumerate}
\end{remark}

\begin{definition}[cf. \cite{Tr86}]
\label{Buchsbaum}
Let $M$ be a finitely generated module over a Noetherian local ring $(R,\fm,k)$ such that $t=\dim M>0$. A parameter ideal $\fq$ of $M$ is called \textit{standard} if
$$
\ell_R(M/\fq M)-e(\fq, M) = \sum_{i=0}^{t-1} \binom{t-1}{i} \ell_R(H^i_{\fm}(M)).
$$
An $\fm$-primary ideal $\fa$ is said to be {\it standard} if every parameter ideal contained in $\fa$ is standard.
We say that $M$ is \textit{Buchsbaum} if every parameter ideal of $M$ is standard, i.e. $\fm $ is a standard ideal of $M$.
\end{definition}

Standard ideals admit a cohomological characterization as follows.
\begin{remark}\label{standard ideal}
\begin{enumerate}
\item The parameter ideal $\fq$ of $M$ is standard if and only if the canonical homomorphism $H^i(\fq; M) \to H^i_{\fm}(M)$ is surjective for all $i<t$. Moreover $M$ is Buchsbaum if and only if the canonical homomorphism $H^i(\fm; M) \to H^i_{\fm}(M)$ is surjective for all $i<t$, where the Koszul cohomology can be defined by any set of generators of $\fm$ (\cite[Theorem 3.4]{Tr86}).
\item Let $M$ be a generalized Cohen-Macaulay module over $(R,\fm,k)$ such that $d=\dim M>0$ and let $n \in \mathbb{N}$ be a positive integer such that $\fm^n \, H^i_{\fm}(M) = 0$ for all $i<d$. Then every parameter element $x \in \fm^{2n}$ of $M$ admits the splitting property, i.e. $H^i_{\fm}(M/xM) \cong H^i_{\fm}(M) \oplus H^{i+1}_{\fm}(M)$ for all $i<d-1$. Furthermore, every parameter ideal contained in $\fm^{2n}$ is standard (cf. \cite{CQ11}).
\end{enumerate}
\end{remark}
We also need the notion of limit closure of parameter ideal in the sequel.
\begin{definition}
\label{limit}
Let $(R,\fm,k)$ be a local ring, let $M$ be a finitely generated module with $t=\dim M$ and let $\underline{x} =x_1,\ldots,x_t$ be a of system of parameters of $M$. The \textit{limit closure} of $\underline{x}$ in $M$ is defined as a submodule of $M$:
$$
(\underline{x})_M^{\lim}=\bigcup_{n>0}\big((x_1^{n+1},\ldots,x_t^{n+1})M:_M (x_1 \cdots x_t)^n \big)
$$
with the convention that $(\underline{x})_M^{\lim}=0$ when $t=0$. If $M=R$, then we simply write $(\underline{x})^{\lim}$.
\end{definition}

From the definition, it is clear that $(\underline{x})M \subseteq (\underline{x})_M^{\lim}$.

\begin{remark}
\label{limitclosure}
Let the notation be as in Definition \ref{limit}.
\begin{enumerate}
\item
The quotient $(\underline{x})_M^{\lim}/(\underline{x})M$ is the kernel of the canonical map
$$H^t(\underline{x};M) \cong M/(\underline{x})M \to H^t_{\fm}(M).$$ This implies the following fact. Let $\fq=(x_1,\ldots,x_t)$ and put $\fq_M^{\lim}:=(\underline{x})^{\lim}_M$. Hence the notation $\fq_M^{\lim}$ is independent of the choice of $x_1, \ldots,x_t$ which generate $\fq$.

\item
It is known that $(\underline{x})M=(\underline{x})_M^{\lim}$ if and only if $\underline{x}$ forms an $M$-regular sequence.

\item
It is shown that the Hochster's monomial conjecture is equivalent to the claim that $\fq^{\lim} \neq R$ for every parameter ideal $\fq$ of $R$.
\item If $M$ is a generalized Cohen-Macaulay module of dimension $t>0$, then for every parameter ideal $\fq$ we have
$$ \ell_R(\fq^{\lim}_M/ \fq M ) \le \sum_{i=0}^{t-1}\binom{t}{i} \ell_R (H^i_{\fm}(M)),$$
and the equality occurs if and only if $\fq$ is standard by Definition \ref{Buchsbaum} and \cite[Theorem 5.1]{CHL99} (see also \cite[Proposition 3.6]{G83}).
\end{enumerate}
\end{remark}

\subsection{Tight closure and $F$-singularities} In the rest of this paper, we always assume that $(R, \fm, k)$ is an excellent equidimnesional local ring of characteristic $p>0$ and of dimension $d>0$. If we want to notationally distinguish the source and target of the $e$-th Frobenius endomorphism $F^e: R \overset{x \mapsto x^{p^e}}{\longrightarrow} R$, we will use $F_*^e(R)$ to denote the target. $F_*^e(R)$ is an $R$-bimodule, which is the same as $R$ as
an abelian group and as a right $R$-module, that acquires its left $R$-module structure via the $e$-th Frobenius
endomorphism $F^e$. By definition the $e$-th Frobenius endomorphism $F^e: R \to F_*^e(R)$ sending $x$ to $F_*^e(x^{p^e}) = x \cdot F_*^e(1)$ is an $R$-homomorphism. We say $R$ is {\it $F$-finite} if $F_*(R)$ is a finite $R$-module. When $R$ is reduced, we will use $R^{1/p^e}$ to denote the ring whose elements are $p^e$-th roots of elements of $R$. Note that these notations (when $R$ is reduced) $F_*^e(R)$ and $R^{1/p^e}$ are used interchangeably in the literature.
\begin{definition}[\cite{HH90,HH94,H96}] Let $R^{\circ} = R \setminus \cup_{\fp \in \mathrm{Min}R} \fp$. Then for any ideal $I$ of $R$ we define
\begin{enumerate}
  \item The {\it Frobenius closure} of $I$, $I^F = \{x \mid  x^q \in I^{[q]} \text{ for some } q = p^e\}$, where $I^{[q]} = (x^q \mid x \in I)$.
  \item The {\it tight closure} of $I$, $I^* = \{x \mid cx^q \in I^{[q]} \text{ for some } c \in R^{\circ} \text{ and for all } q \gg 0\}$.
\end{enumerate}

\end{definition}

\begin{remark} An element $x \in I^F$ if it is contained in the kernel of the composition
 $$R \to R/I \overset{\mathrm{id} \otimes F^e}{ \longrightarrow} R/I \otimes F_*^e(R)$$
 for some $e \ge 0$. Similarly, an element $x \in I^*$ if it is contained in the kernel of the composition
$$R \to R/I \to R/I \otimes F_*^e(R) \overset{\mathrm{id} \otimes F_*^e(c)}{\longrightarrow} R/I \otimes F_*^e(R)$$ for some $c \in R^{\circ}$ and for all $e \gg 0$. In general, let $N$ be a submodule of an $R$-module $M$. The tight closure of $N$ in $M$, denoted by $N^*_M$, consists elements that are contained in the kernel of the composition
 $$M \to M/N \to M/N \otimes F_*^e(R) \overset{\mathrm{id} \otimes F_*^e(c)}{\longrightarrow} M/N \otimes F_*^e(R)$$ for some $c \in R^{\circ}$ and for all $e \gg 0$.
\end{remark}

Let $x_1, \ldots, x_d$ be a system of parameters of $R$. Recall that local cohomology $H^i_{\fm}(R)$ may be computed as the homology of the \v{C}ech complex
$$
0 \to R \to \bigoplus_{i=1}^t R_{x_i} \to \cdots \to R_{x_1 \ldots x_d} \to 0.
$$
Then the Frobenius endomorphism $F:R \to R$ induces a natural Frobenius action $F:H^i_{\fm}(R) \to H^i_{\fm^{[p]}}(R) \cong H^i_{\fm}(R)$. There is a very useful way of describing the top local cohomology. It can be given as the direct limit of Koszul cohomologies
$$
H^d_{\fm}(R) \cong \lim_{\longrightarrow} R/(x_1^n, \ldots, x_d^n).
$$
Then for each $\overline{a} \in H^d_{\fm}(R)$, which is the canonical image of $a+(x_1^n, \ldots, x_d^n)$, we find that $F(\overline{a})$ is the canonical image of $a^p +(x_1^{pn}, \ldots, x_d^{pn})$ (see \cite{Sm97}).

\begin{remark}\label{direct system} Recall that we always assume $R$ is excellent and equidimensional.
 \begin{enumerate}
     \item An element $x \in 0^*_{H^d_{\fm}(R)}$ if there exists $c \in R^{\circ}$ such that $c \, F^e(x) = 0$ for all $e \gg 0$. Let $x_1, \ldots, x_d$ be a system of parameters of $R$. The direct system $\lim  R/(x_1^n, \ldots, x_d^n) \cong H^d_{\fm}(R)$ induces the direct system of tight closures
         $$\lim_{\longrightarrow}\, (x_1^n, \ldots, x_d^n)^*/(x_1^n, \ldots, x_d^n) \cong 0^*_{H^d_{\fm}(R)}.$$
         By \cite[Remark 5.4]{Hu98} we have $(x_1, \ldots, x_d)^{\lim} \subseteq (x_1, \ldots, x_d)^*$ for all system of parameters $x_1, \ldots, x_d$. By Remark \ref{limitclosure} (1) we obtain the direct system
         $$\lim_{\longrightarrow}\, (x_1^n, \ldots, x_d^n)^*/(x_1^n, \ldots, x_d^n)^{\lim} \cong 0^*_{H^d_{\fm}(R)}$$
         with all maps in the direct system are injective. As a consequence, $\ell_R \, (\fq^*/ \fq^{\lim})$ is bounded above if and only if $\ell_R (0^*_{H^d_{\fm}(R)}) < \infty$. In this case
         $\max_{\fq} \{\ell_R \, (\fq^*/ \fq^{\lim})\} = \ell_R (0^*_{H^d_{\fm}(R)})$, and $\ell_R \, (\fq^*/ \fq^{\lim}) = \ell_R (0^*_{H^d_{\fm}(R)})$ for all parameter ideals $\fq$ contained in a large enough power of maximal ideal.
     \item If we consider the target of the Frobenius endomorphism $F: R \to R$ as an $R$-module $F_*(R)$ via the Frobebenius endomorphism, then the Frobenius action on $H^i_{\fm}(R)$ becomes an $R$-homomorphism $F: H^i_{\fm}(R) \to H^i_{\fm}(F_*(R))$ for all $i \ge 0$.
   \end{enumerate}
\end{remark}
We now present $F$-singularities used in this paper.
\begin{definition} A local ring $(R, \fm, k)$ is called {\it $F$-rational} if every parameter ideal is tight closed, i.e. $\fq^* = \fq$ for all $\fq$.
\end{definition}
\begin{remark} It is well known that an equidimensional excellent local ring $R$ is $F$-rational if and only if $R$ is Cohen-Macualay and $0^*_{H^d_{\fm}(R)} = 0$. Furthermore, $R$ is normal provided it is $F$-rational.
\end{remark}
The following is the main result of Goto and Nakamura \cite[Theorem 1.1]{GN02}.
\begin{theorem} \label{Goto} Let $(R, \fm, k)$ be an equidimensional excellent local ring. Then the following are equivalent.
\begin{enumerate}
  \item $\ell_R (\fq^* / \fq)$ is bounded above;
  \item $R$ is $F$-rational on the punctured spectrum $\mathrm{Spec}^{\circ}(R) = \mathrm{Spec}(R) \setminus \{ \fm \}$;
  \item $R$ is generalized Cohen-Macaulay and $\ell_R (0^*_{H^d_{\fm}(R)}) < \infty$.
\end{enumerate}

\end{theorem}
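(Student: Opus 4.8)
The plan is to split $\ell_R(\fq^*/\fq)$ into two pieces, dispose of the first using the Buchsbaum property, and reduce the second to a containment inside $H^d_\fm(R)$ that is forced by $F$-injectivity.

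\emph{Structural input.} Since $R$ is $F$-rational on the punctured spectrum, Theorem~\ref{Goto} gives that $R$ is generalized Cohen--Macaulay and $\ell_R(0^*_{H^d_\fm(R)})<\infty$. Since $R$ is moreover $F$-injective, $R$ is Buchsbaum by Ma's theorem \cite{M15} (recalled in the introduction), so every parameter ideal is standard. Finally, $F$-injectivity says the natural Frobenius action on $H^d_\fm(R)$ is injective, i.e. $0^F_{H^d_\fm(R)}=0$; as $0^*_{H^d_\fm(R)}$ is a Frobenius-stable submodule of finite length, the restriction of Frobenius to it is an injective endomorphism of a finite-length module, hence bijective.

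\emph{Reduction.} For a parameter ideal $\fq$ write $\ell_R(\fq^*/\fq)=\ell_R(\fq^*/\fq^{\lim})+\ell_R(\fq^{\lim}/\fq)$. By Remark~\ref{limitclosure}(4) together with the Buchsbaum property, $\ell_R(\fq^{\lim}/\fq)=\sum_{i=0}^{d-1}\binom{d}{i}\ell_R(H^i_\fm(R))$, which is independent of $\fq$; so it suffices to prove $\ell_R(\fq^*/\fq^{\lim})$ is independent of $\fq$. By Remark~\ref{direct system}(1) there is a natural injection $\fq^*/\fq^{\lim}\hookrightarrow 0^*_{H^d_\fm(R)}$, an isomorphism once $\fq$ lies in a large enough power of $\fm$; so it is enough to show this map is onto for \emph{every} $\fq$. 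Identifying $\fq^*/\fq^{\lim}$ with its image, I would first check that this image equals $0^*_{H^d_\fm(R)}\cap\im\!\bigl(R/\fq\to H^d_\fm(R)\bigr)$: if $a\in R$ has image in $0^*_{H^d_\fm(R)}$ under the level-one map, then $ca^{p^e}\in(\fq^{[p^e]})^{\lim}$ for some $c\in R^{\circ}$ and all $e\gg0$; since Remark~\ref{limitclosure}(4) bounds $\ell_R((\fq')^{\lim}/\fq')$ uniformly over parameter ideals $\fq'$, there is a fixed $L$ with $\fm^{L}(\fq')^{\lim}\subseteq\fq'$ for all such $\fq'$, and choosing $z\in\fm\cap R^{\circ}$ gives $z^{L}ca^{p^e}\in\fq^{[p^e]}$ for $e\gg0$, whence $a\in\fq^*$.

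\emph{The main point.} It remains to prove $0^*_{H^d_\fm(R)}\subseteq\im\!\bigl(R/\fq\to H^d_\fm(R)\bigr)$ for every parameter ideal $\fq$. For $e\gg0$ the ideal $\fq^{[p^e]}$ lies in a large power of $\fm$, so the inclusion is already known there: $0^*_{H^d_\fm(R)}\subseteq\im\!\bigl(R/\fq^{[p^e]}\to H^d_\fm(R)\bigr)$. I would descend from level $p^e$ to level $1$ using that the level-one maps are compatible with Frobenius --- $F^e$ carries $\im(R/\fq\to H^d_\fm(R))$ into $\im(R/\fq^{[p^e]}\to H^d_\fm(R))$ and carries $0^*_{H^d_\fm(R)}$ \emph{bijectively} onto itself --- together with the fact (from the reduction above, applied to $\fq^{[p^e]}$) that every element of $0^*_{H^d_\fm(R)}$ has a representative at level $p^e$ lying in $(\fq^{[p^e]})^*$; unwinding this compatibility along $(F^e)^{-1}$ should produce, for a given $\eta\in 0^*_{H^d_\fm(R)}$, a level-one representative lying in $\fq^*$. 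I expect this descent to be the technical heart of the argument, and it is precisely here that $F$-injectivity is indispensable: for a general generalized Cohen--Macaulay ring $\ell_R(\fq^*/\fq^{\lim})$ genuinely depends on $\fq$, and it is the bijectivity of Frobenius on $0^*_{H^d_\fm(R)}$ that rigidifies the direct system of Remark~\ref{direct system}(1) into a constant one. (In the $F$-finite case this is carried out directly; the general equidimensional excellent case is then reduced to it via the $\Gamma$-construction, as indicated in the introduction.)
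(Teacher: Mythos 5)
Your write-up does not address the statement you were asked to prove. The statement is the Goto--Nakamura equivalence (Theorem~\ref{Goto}): for an equidimensional excellent local ring $R$, the boundedness of $\ell_R(\fq^*/\fq)$ over all parameter ideals, $F$-rationality on the punctured spectrum, and the condition ``generalized Cohen--Macaulay with $\ell_R(0^*_{H^d_\fm(R)})<\infty$'' are equivalent. What you have written instead is a sketch of the paper's Main Theorem (constancy of $\ell_R(\fq^*/\fq)$ under the additional hypothesis of $F$-injectivity), and in your very first step you \emph{invoke} Theorem~\ref{Goto} as a known input. So the argument is circular as a proof of the statement at hand: none of the three listed conditions appears as the conclusion of your argument, $F$-injectivity is not among the hypotheses of Theorem~\ref{Goto}, and constancy of $\ell_R(\fq^*/\fq)$ is a strictly stronger conclusion than any of (1)--(3), obtained only under stronger hypotheses. (For what it is worth, the paper itself does not prove this theorem either; it quotes it from Goto and Nakamura, so there is no internal proof to compare against.)

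If you want to actually prove the equivalence, the natural route is already latent in the preliminaries you cite. For $(1)\Leftrightarrow(3)$: write $\ell_R(\fq^*/\fq)=\ell_R(\fq^*/\fq^{\lim})+\ell_R(\fq^{\lim}/\fq)$; by Remark~\ref{limitclosure} the second summand is bounded over all parameter ideals exactly when $R$ is generalized Cohen--Macaulay, and by the injective direct system of Remark~\ref{direct system}(1) the first summand is bounded exactly when $\ell_R(0^*_{H^d_\fm(R)})<\infty$ (with supremum equal to that length). For $(2)\Leftrightarrow(3)$ one needs genuinely new content not supplied by your sketch: that tight closure of parameter ideals localizes well enough on an excellent equidimensional ring to identify the non-$F$-rational locus with the support of suitable finite-length obstructions (e.g.\ via test elements and the openness of the $F$-rational locus), together with the fact that $F$-rational local rings are Cohen--Macaulay, so that $F$-rationality off $\fm$ forces the non-Cohen--Macaulay locus to be $\{\fm\}$ and hence $R$ to be generalized Cohen--Macaulay. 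None of this is touched by your proposal.
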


In more detail, we have the following.
\begin{proposition} \label{upper above} Let $(R, \fm, k)$ be an equidimensional excellent local ring of characteristic $p>0$ and of dimension $d>0$ that is $F$-rational on the punctured spectrum. Then for every parameter ideal $\fq$ of $R$ we have
$$\ell_R\, (\fq^* / \fq) \le \sum_{i=0}^{d-1}\binom{d}{i} \ell_R (H^i_{\fm}(R)) + \ell_R (0^*_{H^d_{\fm}(R)}).$$
Moreover, the equality occurs if and only if $\fq$ is a standard parameter ideal satisfying one of the following condition
\begin{enumerate}
  \item The canonical map $\fq^* / \fq \to 0^*_{H^d_{\fm}(R)}$ is surjective;
  \item The canonical map $\fq^* / \fq^{\lim} \to 0^*_{H^d_{\fm}(R)}$ is an isomorphism.
\end{enumerate}
Furthermore, we have the equality for all parameter ideals $\fq$ contained in a large enough power of $\fm$.
\end{proposition}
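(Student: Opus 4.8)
The plan is to interpolate the chain of ideals $\fq \subseteq \fq^{\lim} \subseteq \fq^*$ and estimate the two successive quotients separately. The inclusion $\fq \subseteq \fq^{\lim}$ is immediate from the definition, and $\fq^{\lim} \subseteq \fq^*$ is recorded in Remark \ref{direct system} (1) (it is \cite[Remark 5.4]{Hu98}); moreover all three of $\ell_R(\fq^*/\fq)$, $\ell_R(\fq^{\lim}/\fq)$ and $\ell_R(0^*_{H^d_{\fm}(R)})$ are finite here, since $R$ is $F$-rational on the punctured spectrum (Theorem \ref{Goto}). Thus
$$\ell_R(\fq^*/\fq) = \ell_R(\fq^{\lim}/\fq) + \ell_R(\fq^*/\fq^{\lim}),$$
and it suffices to bound each summand.

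For the first summand I use only that $R$ is generalized Cohen-Macaulay, which follows from Theorem \ref{Goto}: by Remark \ref{limitclosure} (4),
$$\ell_R(\fq^{\lim}/\fq) \le \sum_{i=0}^{d-1}\binom{d}{i}\ell_R(H^i_{\fm}(R)),$$
with equality exactly when $\fq$ is standard. For the second summand, write $\fq = (x_1,\ldots,x_d)$; the canonical map $\fq^*/\fq^{\lim} \to 0^*_{H^d_{\fm}(R)}$ is the map from the first stage into the limit of the direct system
$$\lim_{\longrightarrow}\,(x_1^n,\ldots,x_d^n)^*/(x_1^n,\ldots,x_d^n)^{\lim} \cong 0^*_{H^d_{\fm}(R)}$$
of Remark \ref{direct system} (1), all of whose maps are injective; hence it is injective and
$$\ell_R(\fq^*/\fq^{\lim}) \le \ell_R(0^*_{H^d_{\fm}(R)}).$$
Adding the two estimates gives the asserted inequality.

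For the equality statement, note that equality in the displayed bound forces equality in each of the two estimates above. The first is an equality precisely when $\fq$ is standard, by Remark \ref{limitclosure} (4). The second is an equality precisely when the injection $\fq^*/\fq^{\lim} \hookrightarrow 0^*_{H^d_{\fm}(R)}$ is surjective, i.e. an isomorphism, which is condition (2); and since $\fq^*/\fq \twoheadrightarrow \fq^*/\fq^{\lim}$ is surjective, condition (2) is equivalent to surjectivity of $\fq^*/\fq \to 0^*_{H^d_{\fm}(R)}$, which is condition (1). This yields the claimed characterization. Finally, choose $n$ with $\fm^n H^i_{\fm}(R) = 0$ for all $i<d$; after possibly enlarging $n$ we may also assume, by Remark \ref{direct system} (1), that $\ell_R(\fq^*/\fq^{\lim}) = \ell_R(0^*_{H^d_{\fm}(R)})$ for every parameter ideal $\fq \subseteq \fm^n$. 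By Remark \ref{standard ideal} (2) every parameter ideal contained in $\fm^{2n}$ is standard, so both estimates, and hence the bound, are equalities for every parameter ideal $\fq \subseteq \fm^{2n}$.

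The argument is thus essentially bookkeeping on top of the facts recalled in Section 2; the only non-formal inputs are the inclusion $\fq^{\lim} \subseteq \fq^*$ and the injectivity of the maps in the relevant direct system, both from Remark \ref{direct system} (1), together with the finiteness assertions coming from Theorem \ref{Goto}. There is no serious obstacle beyond keeping the two equality conditions in lockstep and observing that (1) and (2) are equivalent.
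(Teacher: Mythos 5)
Your proof is correct and follows essentially the same route as the paper's: decompose along the chain $\fq \subseteq \fq^{\lim} \subseteq \fq^*$, bound $\ell_R(\fq^{\lim}/\fq)$ via Remark \ref{limitclosure} (4) and $\ell_R(\fq^*/\fq^{\lim})$ via the injective direct system of Remark \ref{direct system} (1). The paper's proof is just a two-line version of this; your fuller treatment of the equality cases and of the ``large enough power of $\fm$'' statement is exactly what those remarks are meant to supply.
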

\begin{proof} Note that $\fq  \subseteq \fq^{\lim}  \subseteq\fq^*$, so $\ell_R\, (\fq^* / \fq) = \ell_R\, (\fq^* / \fq^{\lim}) + \ell_R\, (\fq^{\lim} / \fq)$.
The assertion now follows from Remarks \ref{limitclosure} (4) and  \ref{direct system} (1).
\end{proof}

Since every parameter ideal of a Buchsbaum ring is standard, we have
\begin{proposition} \label{constant} Let $(R, \fm, k)$ be an equidimensional excellent local ring of characteristic $p>0$ and of dimension $d>0$. Suppose $R$ is a Buchsbaum ring that is $F$-rational on the punctured spectrum. Then the following are equivalent.
\begin{enumerate}
  \item $\ell_R\, (\fq^* / \fq) = \sum_{i=0}^{d-1}\binom{d}{i} \ell_R (H^i_{\fm}(R)) + \ell_R (0^*_{H^d_{\fm}(R)})$ for all parameter ideals $\fq$;
  \item The canonical map $\fq^* / \fq \to 0^*_{H^d_{\fm}(R)}$ is surjective for all parameter ideals $\fq$;
  \item The canonical map $\fq^* / \fq^{\lim} \to 0^*_{H^d_{\fm}(R)}$ is an isomorphism for all parameter ideals $\fq$.
\end{enumerate}
\end{proposition}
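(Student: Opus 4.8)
The plan is to obtain this proposition as an essentially immediate consequence of Proposition~\ref{upper above}, so that no new technical input is required beyond feeding in the Buchsbaum hypothesis correctly. First I would record that the three conditions even make sense: since $R$ is $F$-rational on the punctured spectrum, Theorem~\ref{Goto} gives $\ell_R(0^*_{H^d_{\fm}(R)})<\infty$, and since $R$ is Buchsbaum the modules $H^i_{\fm}(R)$ have finite length for every $i<d$, so every length appearing in (1) is finite. The key point, however, is that because $R$ is Buchsbaum, \emph{every} parameter ideal $\fq$ of $R$ is standard in the sense of Definition~\ref{Buchsbaum}; thus the ``$\fq$ standard'' requirement built into Proposition~\ref{upper above} is automatically met by every $\fq$.

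Next I would apply Proposition~\ref{upper above} to an arbitrary parameter ideal $\fq$. It asserts that
$$\ell_R(\fq^*/\fq)\le \sum_{i=0}^{d-1}\binom{d}{i}\ell_R(H^i_{\fm}(R))+\ell_R(0^*_{H^d_{\fm}(R)})$$
always holds, with equality if and only if $\fq$ is standard and, in addition, one of its two side conditions holds. Since here the standardness of $\fq$ is automatic, for each fixed $\fq$ the equality is equivalent to the surjectivity of $\fq^*/\fq\to 0^*_{H^d_{\fm}(R)}$ and equivalent to $\fq^*/\fq^{\lim}\to 0^*_{H^d_{\fm}(R)}$ being an isomorphism. (In passing one checks that these two side conditions coincide for a fixed $\fq$: the canonical map $\fq^*/\fq\to 0^*_{H^d_{\fm}(R)}$ factors as the surjection $\fq^*/\fq\twoheadrightarrow\fq^*/\fq^{\lim}$ followed by the injection $\fq^*/\fq^{\lim}\hookrightarrow 0^*_{H^d_{\fm}(R)}$ coming from Remark~\ref{limitclosure}(1) and Remark~\ref{direct system}(1), so the first map is surjective exactly when the second is an isomorphism.) Finally I would quantify this pointwise equivalence over all parameter ideals $\fq$: the resulting ``for all $\fq$'' statements are precisely (1), (2), (3), yielding $(1)\Leftrightarrow(2)\Leftrightarrow(3)$.

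I do not anticipate any genuine obstacle, since all of the substance has been front-loaded into Proposition~\ref{upper above}, which in turn rests on Remark~\ref{limitclosure}(4) and Remark~\ref{direct system}(1). The only matters requiring a little care are bookkeeping ones: making sure that the Buchsbaum hypothesis really does upgrade ``standard for parameter ideals in a large power of $\fm$'' to ``standard for every parameter ideal'', and verifying that the canonical maps named in (2) and (3) are literally the ones appearing in Proposition~\ref{upper above}, so that the three quantified conditions match up on the nose.
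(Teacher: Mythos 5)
Your proposal is correct and follows exactly the paper's route: the paper derives Proposition~\ref{constant} in one line from Proposition~\ref{upper above} by observing that every parameter ideal of a Buchsbaum ring is standard, which is precisely your argument (including the factorization of $\fq^*/\fq \to 0^*_{H^d_{\fm}(R)}$ through the injection $\fq^*/\fq^{\lim} \hookrightarrow 0^*_{H^d_{\fm}(R)}$ that identifies conditions (2) and (3) for each fixed $\fq$).
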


The relation between Buchsbaum rings and $F$-singularities appears explicit in \cite{M15}.
\begin{definition} A local ring $(R,\fm,k)$ is \textit{$F$-injective} if the Frobenius action on $H^i_{\fm}(R)$ is injective for all $i \ge 0$.
\end{definition}

\begin{remark}\label{injective} If $R$ is $F$-injective then it is reduced (cf. \cite[Lemma 3.11]{QS17}). Conversely, a reduced ring $R$ is $F$-injective if and only if the inclusion $R \hookrightarrow F_*^e(R)$ induces injective $R$-homomorphisms $H^i_{\fm}(R) \to H^i_{\fm}(F_*^e(R))$ for all $e, i \ge 0$.

\end{remark}

\begin{remark} Recently, Ma \cite[Corollary 3.5]{M15} showed that an $F$-injective generalized Cohen-Macaulay ring is Buchsbaum. Thus we have the following implications of singularities in this paper
\[
\xymatrix{
 \text{ F\text{-rational} }\ar@{=>}[d] \ar@{=>}[r] & F\text{-rational/} \mathrm{Spec}^{\circ}(R)\, \& \,F\text{-injective}  \ar@{=>}[d] \ar@{=>}[r] & F\text{-rational/} \mathrm{Spec}^{\circ}(R) \ar@{=>}[d]\\
 \text{CM \& normal} \ar@{=>}[r]          & \text{Buchsbaum \& reduced} \ar@{=>}[r]  & \text{Generalized CM.}
          }
\]

\end{remark}
\section{Proof of the main theorem in the $F$-finite case}
We prove the main theorem for $F$-finite rings. Our method is inspired by the proof of Theorem 3.7 of \cite[Arxiv: 1512.05374, Version 1]{BMS16}. Note that any $F$-finite ring is excellent by Kunz \cite{Ku76}.

\begin{theorem}\label{F-finite case}
  Let $(R, \fm)$ be an equidimensional $F$-finite local ring of dimension $d>0$. Suppose $R$ is an $F$-injective local ring that is $F$-rational on the punctured spectrum. Then $\ell_R(\fq^*/\fq)$ does not depend on the choice of parameter ideal $\fq$.
\end{theorem}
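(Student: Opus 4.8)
The idea is to split $\ell_R(\fq^*/\fq)$ into a ``Buchsbaum part'' $\ell_R(\fq^{\lim}/\fq)$, which is already known to be independent of $\fq$, and a ``tight-closure part'' $\ell_R(\fq^*/\fq^{\lim})$, and then to analyse the latter entirely inside $H^d_{\fm}(R)$ together with its Frobenius action.

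First, the reductions. Since $R$ is $F$-rational on the punctured spectrum, Theorem~\ref{Goto} gives that $R$ is generalized Cohen--Macaulay with $\ell_R(0^*_{H^d_{\fm}(R)})<\infty$; since $R$ is also $F$-injective, $R$ is Buchsbaum by Ma's theorem (\cite[Corollary~3.5]{M15}), so every parameter ideal is standard and $\ell_R(\fq^{\lim}/\fq)=\sum_{i=0}^{d-1}\binom{d}{i}\ell_R(H^i_{\fm}(R))$ is independent of $\fq$ by Remark~\ref{limitclosure}(4). By Proposition~\ref{upper above} it therefore suffices to show that the canonical map $\fq^*/\fq^{\lim}\to 0^*_{H^d_{\fm}(R)}$ is an isomorphism for every parameter ideal $\fq$; as this map is injective by Remark~\ref{direct system}(1), only surjectivity is at stake. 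Let $\phi_{\fq}\colon R/\fq\to H^d_{\fm}(R)$ be the canonical map, so $\ker\phi_{\fq}=\fq^{\lim}/\fq$. A short computation shows that $a\in\fq^*$ if and only if $\phi_{\fq}(a+\fq)\in 0^*_{H^d_{\fm}(R)}$: the forward direction is immediate from the description of the Frobenius on $H^d_{\fm}(R)$, and for the converse, if $\phi_{\fq}(a+\fq)\in 0^*_{H^d_{\fm}(R)}$ then $c\,a^{p^e}\in(\fq^{[p^e]})^{\lim}\subseteq(\fq^{[p^e]})^*$ for some $c\in R^{\circ}$ and all $e\gg 0$ (using Remark~\ref{direct system}(1)), and multiplying by a test element of the excellent reduced ring $R$ yields $a\in\fq^*$. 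Hence the image of $\fq^*/\fq^{\lim}$ under $\phi_{\fq}$ is exactly $0^*_{H^d_{\fm}(R)}\cap\im\phi_{\fq}$, and the theorem is now equivalent to the assertion
\[
 0^*_{H^d_{\fm}(R)}\subseteq \im\bigl(\phi_{\fq}\colon R/\fq\to H^d_{\fm}(R)\bigr)\qquad\text{for every parameter ideal }\fq .
\]

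Now fix a system of parameters $x_1,\dots,x_d$ generating $\fq$, write $E=H^d_{\fm}(R)=\varinjlim_n R/(x_1^n,\dots,x_d^n)$ (transition maps: multiplication by $x_1\cdots x_d$), and set $V_n=\im\bigl(R/(x_1^n,\dots,x_d^n)\to E\bigr)$, so that $V_1=\im\phi_{\fq}$, the $V_n$ are finite-length submodules with $\bigcup_n V_n=E$, and $0^*_E=\bigcup_n(0^*_E\cap V_n)$ stabilises since $\ell_R(0^*_E)<\infty$. Applying the displayed assertion to the systems of parameters $x_1^{2^k},\dots,x_d^{2^k}$, one sees it is enough to prove $0^*_E\cap V_1=0^*_E\cap V_2$ for every system of parameters: this equality for $x_1^2,\dots,x_d^2$ gives $0^*_E\cap V_2=0^*_E\cap V_4$, and so on, whence $0^*_E\cap V_1=0^*_E$. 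To prove $0^*_E\cap V_1=0^*_E\cap V_2$ one exploits that $N:=0^*_E$ is a finite-length submodule of $E$ which is stable under the Frobenius action $F$ and on which $F$ is \emph{injective} (by $F$-injectivity), while $F$ maps $V_n$ into $V_{pn}$. Here the $F$-finite hypothesis is essential: assuming $R$ complete (all hypotheses and the conclusion being insensitive to completion), $F$ is an $R$-linear map $E\to H^d_{\fm}(F_*^e R)\cong F_*^e E$ whose Matlis dual is the Cartier/trace map $F_*^e K\to K$ on $K:=(H^d_{\fm}(R))^{\vee}$, the canonical module of $R$, and $F$-injectivity makes this trace map \emph{surjective}. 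Following the method of the proof of Theorem~3.7 in \cite{BMS16}, one builds a commutative ladder relating the colimit system $\{R/(x_1^n,\dots,x_d^n)\}_n$ to its Frobenius twist and uses surjectivity of the trace together with $\ell_R(N)<\infty$ to conclude that $N$ does not grow when passing from $V_1$ to $V_2$, i.e.\ $0^*_E\cap V_1=0^*_E\cap V_2$, as desired.

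The first two paragraphs are formal once one has Ma's theorem and the test-element identity $\fq^*=\{a:\phi_{\fq}(a+\fq)\in 0^*_{H^d_{\fm}(R)}\}$; the crux is the last step, namely that the finite-length, Frobenius-stable, Frobenius-injective submodule $0^*_{H^d_{\fm}(R)}$ is already ``visible'' at the first stage of the colimit presenting $H^d_{\fm}(R)$. The main obstacle is to control the interaction of the Frobenius with the transition maps $R/(x_1^n,\dots,x_d^n)\to R/(x_1^{n+1},\dots,x_d^{n+1})$ \emph{uniformly in $n$}; this is exactly what the $F$-finite hypothesis supplies, via the Matlis-dual trace map together with the Noetherian finiteness of Hartshorne--Speiser--Lyubeznik type, and it is why the general excellent case must instead be reached in Section~4 through the $\Gamma$-construction.
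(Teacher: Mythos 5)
Your reductions are correct and coincide with the paper's: Ma's theorem makes $R$ Buchsbaum, Proposition \ref{constant} reduces everything to the surjectivity of $\fq^*/\fq^{\lim} \to 0^*_{H^d_{\fm}(R)}$, and (via the test-element identity $\fq^* = \phi_{\fq}^{-1}(0^*_{H^d_{\fm}(R)})$) this is equivalent to $0^*_{E} \subseteq \im\phi_{\fq}$ for every parameter ideal. But that containment is the entire content of the theorem, and your proposal does not prove it: the last step is a one-sentence appeal to ``the method of Theorem 3.7 of \cite{BMS16}'' asserting that a commutative ladder plus surjectivity of the trace map shows $0^*_E$ ``does not grow'' from $V_1$ to $V_2$. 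No such ladder is constructed, and it is not explained how surjectivity of the dual trace --- which is merely a restatement of injectivity of $F$ on $H^d_{\fm}(R)$ --- interacts with the submodules $V_n$. Note also that the Frobenius relates $V_n$ to $V_{pn}$, not to $V_{2n}$, so your reduction to ``$0^*_E\cap V_1 = 0^*_E\cap V_2$'' points away from the structure you would actually need; the natural comparison (and the one the paper makes) is between level $1$ and level $p^e$ for $e\gg 0$.

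Here is what the missing step actually requires. Choose $e\gg 0$ so that $(\fq^{[p^e]})^*/(\fq^{[p^e]})^{\lim}\to 0^*_{H^d_{\fm}(R)}$ is already an isomorphism (possible since $\ell_R(0^*_{H^d_{\fm}(R)})<\infty$), and reinterpret this as the analogous isomorphism for $\fq$ acting on $S=F_*^e(R)$. The paper then uses the short exact sequence $0\to R\to S\to S/R\to 0$, in which \emph{all three terms are Buchsbaum $R$-modules} (this uses the proofs of \cite[Theorem 3.4]{M15} and \cite[Theorem 4.17]{QS17} and is an essential input absent from your sketch), together with the injectivity of $H^i_{\fm}(R)\to H^i_{\fm}(S)$ \emph{for every} $i<d$ --- i.e.\ $F$-injectivity in all degrees, not just the top one your trace-map remark captures --- to split the local cohomology long exact sequence and deduce, by a length count on the kernels $\fq^{\lim}/\fq$, $\fq^{\lim}_S/\fq S$, $\fq^{\lim}_{S/R}/\fq(S/R)$, that $0\to R/\fq^{\lim}\to S/\fq^{\lim}_S\to (S/R)/\fq^{\lim}_{S/R}\to 0$ is exact and maps injectively into $0\to H^d_{\fm}(R)\to H^d_{\fm}(S)\to H^d_{\fm}(S/R)\to 0$. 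Only then does a diagram chase lift an element of $0^*_{H^d_{\fm}(R)}$ to some $x\in R$ with $x^{p^e}\in(\fq^{[p^e]})^*$, hence $x\in\fq^*$. Since your proposal contains neither the Buchsbaum-ness of $S/R$, nor the use of $F$-injectivity below the top degree, nor the length computation that makes the second row exact, the argument has a genuine gap precisely at its crux.
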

\begin{proof} By \cite[Corollary 3.5]{M15} we have $R$ is Buchsbaum. By Proposition \ref{constant} we need only to show that the canonical map $\fq^* / \fq^{\lim} \to 0^*_{H^d_{\fm}(R)}$ is an isomorphism for any parameter ideal $\fq$. Let $e$ be a large enough positive integer such that canonical map $(\fq^{[p^e]})^* / (\fq^{[p^e]})^{\lim} \to 0^*_{H^d_{\fm}(R)}$ is an isomorphism (cf. Remark \ref{direct system}). Since the role of $\fq^{[p^e]}$ in $R$ is the same as that of $\fq$ in $F_*^e(R)$, so the canonical map
$$\fq_ {F_*^e(R)}^* / \fq^{\lim}_{F_*^e(R)} \to 0^*_{H^d_{\fm}(F_*^e(R))}$$ is an isomorphism, where $\fq_ {F_*^e(R)}^*$ is the tight closure of $\fq \, F_*^e(R)$ as a submodule of $F_*^e(R)$. Since $R$ is reduced, we have the short exact sequence of $R$-module
$$0 \to R \to F_*^e(R) \to F_*^e(R)/R \to 0,$$
where both $F_*^e(R)$ and $F_*^e(R)/R$ are Buchsbaum as $R$-modules (see the proofs of \cite[Theorem 3.4]{M15} and \cite[Theorem 4.17]{QS17}). Set $S = F_*^e(R)$. By Remark \ref{injective} we have the short exact sequence of local cohomology
$$0 \to H^i_\fm(R) \to H^i_\fm(S) \to H^i_\fm(S/R) \to 0$$
for all $i \ge 0$. Thus
$$\ell_R \, (H^i_\fm(S)) = \ell_R \, (H^i_\fm(R)) + \ell_R \, (H^i_\fm(S/R))$$
for all $0 \le i \le d-1$. By the proof of \cite[Theorem 4.17]{QS17} we have the following short exact sequence
$$0 \to R/ \fq \to S/ \fq S \to (S/R)/ \fq (S/R) \to 0.$$
Therefore we have the following commutative diagram
$$
\begin{CD}
0 @>>>  R/ \fq  @>>> S/ \fq S @>>> (S/R)/ \fq (S/R) @>>> 0 \\
@. @V\beta VV @V\alpha VV @V\varphi VV \\
0 @>>> H^d_\fm(R) @>>> H^i_\fm(S) @>>> H^i_\fm(S/R) @>>>0.
\end{CD}
$$
By Remark \ref{direct system} (1), $\ker (\beta) = \fq^{\lim} / \fq$. Since $R$ is Buchsbaum,
$$\ell_R \, (\ker (\beta)) = \sum_{i=0}^{d-1} \binom{d}{i} \ell_R \, (H^i_{\fm}(R)).$$
Similarly, we have
$$\ell_R \, (\ker (\alpha)) = \sum_{i=0}^{d-1} \binom{d}{i} \ell_R \, (H^i_{\fm}(S)),$$
and
$$\ell_R \, (\ker (\varphi)) = \sum_{i=0}^{d-1} \binom{d}{i} \ell_R \, (H^i_{\fm}(S/R)).$$
Notice that $\ell_R \, (H^i_\fm(S)) = \ell_R \, (H^i_\fm(R)) + \ell_R \, (H^i_\fm(S/R))$ for all $0 \le i \le d-1$, so
$$\ell_R \, (\ker (\beta)) =  \ell_R \, (\ker (\alpha)) + \ell_R \, (\ker (\varphi)).$$
Therefore the above commutative diagram induces the following commutative diagram
$$
\begin{CD}
0 @>>>  R/ \fq^{\lim}  @>\nu >> S/ \fq^{\lim}_S @>\pi >> (S/R)/ \fq^{\lim}_{(S/R)} @>>> 0 \\
@. @V\overline{\beta} VV @V\overline{\alpha} VV @V\overline{\varphi} VV \\
0 @>>> H^d_\fm(R) @>\mu >> H^i_\fm(S) @>\theta >> H^i_\fm(S/R) @>>>0,
\end{CD}
$$
where $\overline{\alpha}, \overline{\beta}$ and $\overline{\varphi}$ are injective. As above we known that the restriction of $\overline{\alpha}$ on the tight closures
$\fq_ S^* / \fq^{\lim}_S \to 0^*_{H^d_{\fm}(S)}$ is an isomorphism. We claim the restriction map
$$\overline{\beta}: \fq^* / \fq^{\lim} \to 0^*_{H^d_{\fm}(R)}$$
is an isomorphism.  It is enough to show this map is surjective. Indeed, let $y$ be any element in $0^*_{H^d_{\fm}(R)}$. We have $\mu(y) \in 0^*_{H^d_{\fm}(S)}$. Thus there exists $z \in \fq^*_S$ such that $\overline{\alpha} (\overline{z}) = \mu(y)$. We have
 $$\overline{\varphi} (\pi (\overline{z})) = \theta ( \overline{\alpha} (\overline{z}))= \theta ( \mu (y)) = 0.$$
 Thus $\pi (\overline{z}) = 0$ since $\overline{\varphi}$ is injective. Therefore we have $x \in R$ such that $\nu (\overline{x}) = \overline{z} \in \fq_ S^* / \fq^{\lim}_S$. Note that the role of $\fq$ in $F_*(R)$ is the same that of $\fq^{[p^e]}$ in $R$. Now the condition of $x$ means $x^{p^e} \in (\fq^{[p^e]})^*$. Thus we have some $c \in R^{\circ}$ such that $c x^{p^{e+e'}} \in \fq^{[p^{e+e'}]}$ for all $e' \gg 0$. Hence $x \in \fq^*$. Moreover $\mu(\overline{\beta}(\overline{x})) = \overline{\alpha} (\nu (x)) = \mu(y)$. Thus $\overline{\beta}(\overline{x})  = y$ since $\mu$ is injective, and so the canonical map $\fq^* / \fq^{\lim} \to 0^*_{H^d_{\fm}(R)}$ is surjective as desired. The proof is complete.
\end{proof}

We do not have the converse of the previous theorem by the following.

\begin{example} Let $R = k[X, Y, Z]/(X^3 + Y^3 + Z^3)$, where $k$ is a perfect field of characteristic $0< p \neq 3$. The ring $R$ is a Gorenstein ring with isolated singular. The test ideal of $R$ is the maximal ideal (cf. \cite[Example 4.8]{Hu98}). We have $\ell_R \, (\fq^* / \fq) = 1$ for all parameter ideals $\fq$. However $R$ is $F$-injective if and only if $p \equiv 1 \, \mathrm{mod} \, 3$.
\end{example}

\section{Proof of the main theorem for excellent local rings}
We will prove the main theorem mentioned in the introduction. The key ingredient is using the $\Gamma$-construction of Hochster and Huneke \cite{HH94} to reduce the problem to the case of $F$-finite rings of the previous section. We briefly recall the construction. Let $(R,\fm,k)$ be a complete local ring with coefficient field $k$ of characteristic $p>0$ and of dimension $d$. Let us fix a $p$-basis $\Lambda$ of $k \subset R$ and let $\Gamma \subset \Lambda$ be a cofinite subset (we refer the reader to \cite{Ma86} for the definition of a $p$-basis). We denote by $k_e$ (or $k_{\Gamma,e}$ to signify the dependence on the choice of $\Gamma$) the purely inseparable  extension field $k[\Gamma^{1/p^e}]$ of $k$, which is obtained by adjoining $p^e$-th roots of all elements in $\Gamma$. Next, fix a system of parameters $x_1,\ldots,x_d$ of $R$. Then the natural map $A:=k[[x_1,\ldots,x_d]] \to R$ is module-finite. Let us put
$$
A^{\Gamma}:=\bigcup_{e>0} k_e[[x_1,\ldots,x_d]].
$$
Then the natural map $A \to A^{\Gamma}$ is faithfully flat and purely inseparable and $\fm_A A^{\Gamma}$ is the unique maximal ideal of $A^{\Gamma}$. Now we set $R^{\Gamma}:=A^{\Gamma} \otimes_A R$. Then $R \to R^{\Gamma}$ is faithfully flat and purely inseparable. The maximal ideal of $R$ expands to the maximal ideal of $R^{\Gamma}$. The crucial fact about $R^{\Gamma}$ is that it is an $F$-finite local ring (see \cite[(6.6) Lemma]{HH94}). Moreover, we can preserve some good properties of $R$
if we choose a sufficiently small cofinite subset $\Gamma$. For example, if $R$ is reduced, so is $R^{\Gamma}$ for any sufficiently small choice of $\Gamma$ cofinite in $\Lambda$. For each prime ideal $\fp$ of $R$, $\sqrt{\fp R^{\Gamma}}$ is a prime ideal of $R^{\Gamma}$. Furthermore if we choose $\Gamma$ small enough, $\fp R^{\Gamma}$ is also prime. We need several lemmas.

\begin{lemma}[\cite{EH08}, Lemma 2.9] \label{Gamma injec} Let $(R, \fm, k)$ be a complete local ring that is $F$-injective. Then for any sufficiently small choice of $\Gamma$ cofinite in $\Lambda$, $R^{\Gamma}$ is $F$-injective.
\end{lemma}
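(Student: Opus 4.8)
The plan is to deduce $F$-injectivity of $R^\Gamma$ from that of $R$ by faithfully flat base change, dealing with the inseparability of $k^\Gamma_\infty/k$ one ``level'' at a time.

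First I would record the shape of $R^\Gamma$. Writing $k^\Gamma_\infty:=\bigcup_{e>0}k_e$, one has $A^\Gamma=A\otimes_k k^\Gamma_\infty$ (the union is a filtered colimit and each $k_e/k$ is finite), hence $R^\Gamma=R\otimes_k k^\Gamma_\infty$, and $R\to R^\Gamma$ is faithfully flat and local with $\fm R^\Gamma$ the maximal ideal of $R^\Gamma$. Flat base change for local cohomology then gives natural isomorphisms $H^i_\fm(R^\Gamma)\cong H^i_\fm(R)\otimes_R R^\Gamma$ for all $i$, and, computing the Frobenius action through the \v{C}ech complex on a system of parameters of $R$, these isomorphisms intertwine the natural Frobenius actions, the action on the right being $\xi\otimes\lambda\mapsto F(\xi)\otimes\lambda^p$. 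Since $R$ is $F$-injective it is reduced, and for small $\Gamma$ so is $R^\Gamma$ \cite{HH94}; so by Remark \ref{injective} it suffices to show that the inclusions $R^\Gamma\hookrightarrow(R^\Gamma)^{1/p^e}$ induce injective maps $H^i_\fm(R^\Gamma)\to H^i_\fm((R^\Gamma)^{1/p^e})$ for all $i$. Moreover, since $F$ fails to be injective on $H^i_\fm(R^\Gamma)$ precisely when some nonzero class is killed by $F$ (and hence by every $F^e$), it is enough to check, for each fixed nonzero class, that it survives $F^e$ for $e$ as large as we please.

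Next I would invoke the key structural fact, available for small $\Gamma$: the $\Gamma$-construction commutes with passage to $p^e$-th roots, i.e. $(R^\Gamma)^{1/p^e}$ is the $\Gamma^{1/p^e}$-construction of the reduced $F$-finite ring $R^{1/p^e}$, where $\Gamma^{1/p^e}$ is a cofinite subset of the $p$-basis $\Lambda^{1/p^e}$ of the coefficient field $k^{1/p^e}$ of $R^{1/p^e}$; this is part of the Hochster--Huneke machinery and is again where reducedness, hence smallness of $\Gamma$, enters. Consequently $R^{1/p^e}\to(R^\Gamma)^{1/p^e}$ is faithfully flat and local, and flat base change gives $H^i_\fm((R^\Gamma)^{1/p^e})\cong H^i_\fm(R^{1/p^e})\otimes_{R^{1/p^e}}(R^\Gamma)^{1/p^e}$. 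Under these identifications the map we must show injective becomes $\xi\otimes s\mapsto\iota_e(\xi)\otimes\bar s$, where $\iota_e\colon H^i_\fm(R)\to H^i_\fm(R^{1/p^e})$ is induced by $R\hookrightarrow R^{1/p^e}$ and $\bar s$ is the image of $s\in R^\Gamma$ in $(R^\Gamma)^{1/p^e}$. By $F$-injectivity of $R$ the map $\iota_e$ is injective for every $i$ and $e$, so the whole problem reduces to the persistence of this injectivity after the (compatible) flat base changes along $R\to R^\Gamma$ and $R^{1/p^e}\to(R^\Gamma)^{1/p^e}$.

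The main obstacle, and the technical core, lies exactly here: the square with rows $R\hookrightarrow R^{1/p^e}$ and $R^\Gamma\hookrightarrow(R^\Gamma)^{1/p^e}$ is \emph{not} cocartesian, since $R^{1/p^e}\otimes_R R^\Gamma$ acquires nilpotents from the inseparability of $k^\Gamma_\infty/k$ while $(R^\Gamma)^{1/p^e}$ is reduced, so one cannot simply quote ``flat base change of an injection''. I would handle this level by level. A given class in $H^i_\fm(R^\Gamma)=H^i_\fm(R)\otimes_R R^\Gamma=\varinjlim_{e'}H^i_\fm(R)\otimes_R R_{e'}$, where $R_{e'}:=R\otimes_k k_{e'}$ is module-finite (indeed finite free) over $R$, already lies in $H^i_\fm(R)\otimes_R R_{e'}$ for some $e'$; choosing $e\ge e'$, the coefficient subfield $k_{e'}=k[\Gamma^{1/p^{e'}}]$ sits inside $k^{1/p^e}\subseteq R^{1/p^e}$, so $R_{e'}$ maps to $R^{1/p^e}$ compatibly with $R\hookrightarrow R^{1/p^e}$, and for small $\Gamma$ this map is injective (both rings are reduced and it induces a bijection on spectra, since every ring in sight is purely inseparable over $R$). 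On this level the map under study factors as $H^i_\fm(R)\otimes_R R_{e'}\xrightarrow{\iota_e\otimes\mathrm{id}}H^i_\fm(R^{1/p^e})\otimes_R R_{e'}\to H^i_\fm(R^{1/p^e})$ followed by the faithfully flat inclusion $H^i_\fm(R^{1/p^e})\hookrightarrow H^i_\fm((R^\Gamma)^{1/p^e})$; the first arrow is injective because $R_{e'}$ is free over $R$ and $\iota_e$ is injective, so the remaining point is that the second-to-last arrow is injective on the image of $\iota_e\otimes\mathrm{id}$ — equivalently, that a $k$-basis of $k_{e'}$ remains linearly independent over the $R$-submodule $\iota_e(H^i_\fm(R))\subseteq H^i_\fm(R^{1/p^e})$. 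Establishing this last statement, which again rests on the reducedness and compatibility properties of the $\Gamma$-construction for small $\Gamma$, is the step I expect to be the real work.
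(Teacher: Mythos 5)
The paper offers no proof of this lemma at all -- it is quoted directly from Enescu--Hochster \cite{EH08} -- so your attempt has to stand on its own. Your reductions are sound as far as they go: $R^{\Gamma}=R\otimes_k k^{\Gamma}_{\infty}$, flat base change for local cohomology, the identification (once $R^{\Gamma}$ is known to be reduced) of the Frobenius action with the map induced by $R^{\Gamma}\hookrightarrow (R^{\Gamma})^{1/p^e}$, and the observation that $(R^{\Gamma})^{1/p^e}$ is again a $\Gamma$-construction over $R^{1/p^e}$ (a small slip: $R^{1/p^e}$ is \emph{not} $F$-finite in general -- that is the whole reason the $\Gamma$-construction exists). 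After these reductions the lemma is \emph{equivalent} to the statement you isolate in your last sentence: for a single sufficiently small cofinite $\Gamma$ and for \emph{every} $e'$ and every $i$, the map $H^i_{\fm}(R)\otimes_k k[\Gamma^{1/p^{e'}}]\to H^i_{\fm}(R^{1/p^{e'}})$, $\xi\otimes\lambda\mapsto\lambda\,\iota_{e'}(\xi)$, is injective.

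You do not prove this, and you say so ("the step I expect to be the real work"); this is not a loose end but the entire content of the lemma. Two points make it genuinely hard. First, $F$-injectivity of $R$ gives injectivity of $\iota_{e'}$, but it does not give linear disjointness of the $k$-subspace $\iota_{e'}(H^i_{\fm}(R))$ from $k[\Gamma^{1/p^{e'}}]$ inside the $k^{1/p^{e'}}$-vector space $H^i_{\fm}(R^{1/p^{e'}})$; for $\Gamma=\Lambda$ this disjointness can fail, which is exactly why the hypothesis "sufficiently small" appears and why you cannot expect it to follow from "reducedness and compatibility properties" alone. Second, and more seriously, a single choice of $\Gamma$ must make the displayed map injective uniformly in $e'$ (and in $i$), whereas your level-by-level reduction produces at best a condition depending on $e'$; nothing in your argument rules out needing to shrink $\Gamma$ as $e'$ grows. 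Closing the gap requires a stabilization mechanism for the descending family (indexed by cofinite $\Gamma\subseteq\Lambda$) of kernels of these maps, whose total intersection vanishes by $F$-injectivity of $R$ -- this is precisely the technical core of the Enescu--Hochster proof, and it is absent here.
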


The next result can be proven by the same method used in \cite[Proposition 5.6]{M14}, so we omit the detail proof. Note that we use \cite[Lemma 2.3, Theorem 3.5]{V95} to replace the roles of \cite[Proposition 5.4, Lemma 5.5]{M14} in the proof of \cite[Proposition 5.6]{M14}.

\begin{lemma}\label{Gamma F-rational} Let $(R, \fm, k)$ be a complete local ring that is $F$-rational on the punctured spectrum. Then for any sufficiently small choice of $\Gamma$ cofinite in $\Lambda$, $R^{\Gamma}$ is $F$-rational on the punctured spectrum.
\end{lemma}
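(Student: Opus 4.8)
The plan is to exploit that $R\to R^{\Gamma}$ is faithfully flat and purely inseparable, so that $\Spec R^{\Gamma}\to\Spec R$ is a homeomorphism preserving heights, and to combine this with the good behaviour of $F$-rationality under the base changes occurring in the $\Gamma$-construction when $\Gamma$ is chosen small --- that is, to run the method of \cite[Proposition~5.6]{M14} with \cite[Lemma~2.3, Theorem~3.5]{V95} in place of \cite[Proposition~5.4, Lemma~5.5]{M14}.

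I would begin with the structural set-up. By \cite[(6.6)~Lemma]{HH94} the ring $R^{\Gamma}$ is $F$-finite, hence excellent by \cite{Ku76}; and since $R\to R^{\Gamma}$ is faithfully flat with $0$-dimensional closed fibre $k_{\Gamma}$ while $R$ is complete and equidimensional, $R^{\Gamma}$ is equidimensional of dimension $d$, with maximal ideal $\fm'=\fm R^{\Gamma}$. Because $R\to R^{\Gamma}$ is purely inseparable, $\Spec R^{\Gamma}\to\Spec R$ is an order-isomorphism of the spectra; in particular every prime $\fq\neq\fm'$ of $R^{\Gamma}$ lies over a unique prime $\fp\neq\fm$ of $R$, with $\Ht\fq=\Ht\fp$, and $R_{\fp}$ is $F$-rational by hypothesis. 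Thus it suffices to show $(R^{\Gamma})_{\fq}$ is $F$-rational for every such $\fq$. (A flat base change also gives $H^i_{\fm'}(R^{\Gamma})\cong H^i_{\fm}(R)\otimes_R R^{\Gamma}$, which has the same finite length over $R^{\Gamma}$ as $H^i_{\fm}(R)$ has over $R$ for $i<d$, since $R^{\Gamma}/\fm R^{\Gamma}$ is the residue field of $R^{\Gamma}$; so $R^{\Gamma}$ is generalized Cohen-Macaulay and Theorem~\ref{Goto} applies, which is the route by which $F$-rationality on the punctured spectrum of $R^{\Gamma}$ would be turned into finiteness of $\ell_{R^{\Gamma}}(0^*_{H^d_{\fm'}(R^{\Gamma})})$ if one prefers the cohomological formulation.)

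The heart of the matter is then the behaviour of $F$-rationality under the faithfully flat local homomorphism $R_{\fp}\to (R^{\Gamma})_{\fq}$ --- equivalently, along the relevant localizations of the $\Gamma$-construction. Its closed fibre is $0$-dimensional (heights are preserved) and, after shrinking $\Gamma$ so that $\fp R^{\Gamma}$ is prime, equals the field $\kappa(\fq)$, a purely inseparable extension of $\kappa(\fp)$. The point is that this base change is \emph{not} regular (the residue-field extension is inseparable), so one cannot simply quote smooth or regular base change for tight closure; instead one uses the $F$-rationality-specific results of \cite{V95}: \cite[Theorem~3.5]{V95} yields that $(R^{\Gamma})_{\fq}$ inherits $F$-rationality from $R_{\fp}$ once $\Gamma$ is small enough, with \cite[Lemma~2.3]{V95} supplying the control on the tight closure of $0$ in the top local cohomology under this base change (passing, as in \cite[Proposition~5.6]{M14}, through suitable module-finite purely inseparable intermediate algebras and taking a colimit). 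With this available for every $\fq\neq\fm'$, the ring $R^{\Gamma}$ is $F$-rational on its punctured spectrum.

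I expect the main obstacle to be precisely this last input: showing that $0^*_{H^{h}_{\fq}((R^{\Gamma})_{\fq})}=0$ whenever $0^*_{H^{h}_{\fp}(R_{\fp})}=0$ (with $h=\Ht\fq=\Ht\fp$), under a purely inseparable --- hence non-regular --- flat base change, together with the bookkeeping needed to choose $\Gamma$ cofinite yet small enough that $\fp R^{\Gamma}$ is prime for all the relevant $\fp$ simultaneously. Once \cite[Lemma~2.3, Theorem~3.5]{V95} are in hand, the remaining argument is the same formal reduction as in \cite[Proposition~5.6]{M14}, which is why the excerpt omits the detailed proof.
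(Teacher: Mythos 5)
Your proposal follows exactly the route the paper takes: the paper omits the detailed proof and simply invokes the method of \cite[Proposition 5.6]{M14} with \cite[Lemma 2.3, Theorem 3.5]{V95} substituted for \cite[Proposition 5.4, Lemma 5.5]{M14}, which is precisely the reduction (homeomorphism of spectra under the purely inseparable faithfully flat map $R\to R^{\Gamma}$, then descent of $F$-rationality to $(R^{\Gamma})_{\fq}$ from $R_{\fp}$ via V\'elez's base-change results) that you describe. Your identification of the uniform choice of $\Gamma$ over infinitely many primes as the delicate bookkeeping point is also the genuine content hidden in that citation.
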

Let $M$ be an $R$-module with a Frobenius action $F$. A submodule $N$ of $M$ is called \textit{F-compatible} if $F(N) \subseteq N$. An $R$-module $M$ with a Frobenius action is said to be {\it simple} if it has no nontrivial $F$-compatible submodules. If $R$ is a complete local domain, $0^*_{H^d_{\fm}(R)}$ is the unique maximal proper $F$-compatible submodule of $H^d_{\fm}(R)$ by Smith \cite{Sm97}. Hence $H^d_{\fm}(R)/0^*_{H^d_{\fm}(R)}$ is simple. In general, for reduced equidimensional excellent local rings Smith \cite[Proposition 2.5]{Sm97} showed that $0^*_{H^d_{\fm}(R)}$ is the unique maximal proper $F$-compatible submodule of $H^d_{\fm}(R)$ which is annihilated by an element $c \in R^{\circ}$.

\begin{lemma}\label{Gamma local coho}
  Let $(R, \fm, k)$ be an equidimensional complete local ring. Then for any sufficiently small choice of $\Gamma$ cofinite in $\Lambda$, $0^*_{H^d_{\fm}(R^{\Gamma})} = 0^*_{H^d_{\fm}(R)} \otimes_R R^{\Gamma}$.
\end{lemma}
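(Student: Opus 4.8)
My plan is to transport the asserted equality along the faithfully flat map $R\to R^{\Gamma}$ and to lean on K.~Smith's characterization of $0^{*}_{H^{d}_{\fm}(-)}$ recalled just above the lemma. First I would record the base change: since $R\to R^{\Gamma}$ is faithfully flat with $\fm R^{\Gamma}$ the maximal ideal of $R^{\Gamma}$, flat base change gives $H^{i}_{\fm}(R^{\Gamma})\cong H^{i}_{\fm}(R)\otimes_{R}R^{\Gamma}$ for all $i$, compatibly with the natural Frobenius actions (because $R\to R^{\Gamma}$ is a ring map), and faithful flatness makes $H^{d}_{\fm}(R)\hookrightarrow H^{d}_{\fm}(R^{\Gamma})$ an $F$-equivariant inclusion; also, being flat, $R\to R^{\Gamma}$ contracts minimal primes to minimal primes, so $R^{\circ}$ maps into $(R^{\Gamma})^{\circ}$. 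Using the description in Remark~\ref{direct system}(1) (an $\eta$ with $cF^{e}(\eta)=0$ for some $c\in R^{\circ}$ and all $e\gg 0$ stays in the tight closure of zero after base change) this already yields $0^{*}_{H^{d}_{\fm}(R)}\otimes_{R}R^{\Gamma}\subseteq 0^{*}_{H^{d}_{\fm}(R^{\Gamma})}$ for every $\Gamma$. The work is the reverse inclusion for $\Gamma$ small; since the main theorem concerns $F$-injective, hence reduced, rings I would assume $R$ reduced, so that for $\Gamma$ small $R^{\Gamma}$ and every subring of it used below is reduced.

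Next I would reduce to finite extensions. Writing $k_{\Gamma,e}=k[\Gamma^{1/p^{e}}]$ as the increasing union of the finite purely inseparable fields $k[(\Gamma')^{1/p^{e}}]$ over finite $\Gamma'\subseteq\Gamma$ exhibits $R^{\Gamma}=\bigcup_{\alpha}S_{\alpha}$ as a filtered union of $R$-subalgebras $S_{\alpha}$ that are module-finite, free, reduced and purely inseparable over $R$ (with $s^{p^{e(\alpha)}}\in R$ for all $s\in S_{\alpha}$). Local cohomology commutes with this colimit, the transition maps $H^{d}_{\fm}(S_{\alpha})\hookrightarrow H^{d}_{\fm}(S_{\beta})$ are injective, and the Frobenius preserves each $H^{d}_{\fm}(S_{\alpha})$; hence any $\xi\in 0^{*}_{H^{d}_{\fm}(R^{\Gamma})}$ together with a witnessing element of $(R^{\Gamma})^{\circ}$ already lies in some $H^{d}_{\fm}(S_{\alpha})$ with the witness in $S_{\alpha}^{\circ}$, so $\xi\in 0^{*}_{H^{d}_{\fm}(S_{\alpha})}$. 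Thus $0^{*}_{H^{d}_{\fm}(R^{\Gamma})}=\bigcup_{\alpha}0^{*}_{H^{d}_{\fm}(S_{\alpha})}$ and $0^{*}_{H^{d}_{\fm}(R)}\otimes_{R}R^{\Gamma}=\bigcup_{\alpha}(0^{*}_{H^{d}_{\fm}(R)}\otimes_{R}S_{\alpha})$, and it suffices to prove the lemma for one module-finite, free, reduced, purely inseparable extension $R\subseteq S$, say with $s^{p^{e_{0}}}\in R$ for all $s\in S$.

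For such an $S$, one inclusion is the easy one above. For the other I would argue as follows. Put $V=0^{*}_{H^{d}_{\fm}(S)}$; it is $F$-compatible in $H^{d}_{\fm}(S)$ and is killed by a test element $c\in S^{\circ}$ of the excellent reduced ring $S$. Then $V\cap H^{d}_{\fm}(R)$ is $F$-compatible in $H^{d}_{\fm}(R)$ and is killed by $c^{p^{e_{0}}}\in R\cap S^{\circ}\subseteq R^{\circ}$, so $V\cap H^{d}_{\fm}(R)\subseteq 0^{*}_{H^{d}_{\fm}(R)}$ by \cite[Proposition~2.5]{Sm97} applied to $R$. On the other hand, choosing a free $R$-basis $b_{0}=1,b_{1},\dots$ of $S$ and writing $H^{d}_{\fm}(S)=\bigoplus_{i}H^{d}_{\fm}(R)\otimes b_{i}$, a direct computation using that $b_{i}^{p^{e}}\in R$ for $e\ge e_{0}$ gives $F^{e}(H^{d}_{\fm}(S))\subseteq H^{d}_{\fm}(R)$ for $e\ge e_{0}$; hence $F^{e}(V)\subseteq V\cap H^{d}_{\fm}(R)\subseteq 0^{*}_{H^{d}_{\fm}(R)}$ for $e\ge e_{0}$. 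Therefore the image of $V$ in $Q:=(H^{d}_{\fm}(R)/0^{*}_{H^{d}_{\fm}(R)})\otimes_{R}S=H^{d}_{\fm}(S)/(0^{*}_{H^{d}_{\fm}(R)}\otimes_{R}S)$ is annihilated by a power of the Frobenius.

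The last thing needed---and the step I expect to be the real difficulty---is that the Frobenius acts injectively on $Q$; equivalently, that forming $0^{*}_{H^{d}_{\fm}}$ commutes with the finite purely inseparable reduced extension $R\subseteq S$ (equivalently, by Matlis duality over $S$ with $\omega_{S}=\Hom_{R}(S,\omega_{R})$, that the parameter test submodule of the canonical module commutes with this base change). On $H^{d}_{\fm}(R)$ the Frobenius is injective modulo $0^{*}$ for purely formal reasons, but on $Q$ it twists the $S$-factor, and it is exactly here that ``$\Gamma$ sufficiently small'' and reducedness are used; this is the same phenomenon as the good behaviour of test elements and of the test ideal under the $\Gamma$-construction, and I would invoke \cite{HH94} and \cite{EH08} for it, as is done for Lemmas~\ref{Gamma injec} and~\ref{Gamma F-rational}. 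Granting this injectivity, the image of $V$ in $Q$ is $F$-nilpotent hence zero, i.e.\ $0^{*}_{H^{d}_{\fm}(S)}\subseteq 0^{*}_{H^{d}_{\fm}(R)}\otimes_{R}S$, and the lemma follows.
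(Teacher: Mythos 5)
Your first inclusion ($0^*_{H^d_{\fm}(R)}\otimes_R R^{\Gamma}\subseteq 0^*_{H^d_{\fm}(R^{\Gamma})}$) and the reduction to $R$ reduced agree with the paper, but the rest has two problems, one repairable and one fatal. The repairable one: $R^{\Gamma}$ is \emph{not} a filtered union of module-finite $R$-subalgebras. Since $\Gamma$ is cofinite in a $p$-basis it is typically infinite, and a power series in $k_e[[x_1,\dots,x_d]]$ may involve coefficients from infinitely many of the finite subfields $k[(\Gamma')^{1/p^e}]$; for instance $\sum_i \lambda_i^{1/p}x_1^i$ with the $\lambda_i$ distinct elements of $\Gamma$ lies in no $k[(\Gamma')^{1/p}][[x_1]]$ with $\Gamma'$ finite. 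So your $S_\alpha$ do not exhaust $R^{\Gamma}$. One could instead take $S=k_e[[x_1,\dots,x_d]]\otimes_A R$, which is faithfully flat, local and purely inseparable of exponent $e$ over $R$ but not module-finite; the parts of your computation that only use bounded exponent (that $F^{e}$ carries $H^d_{\fm}(S)$ into the image of $H^d_{\fm}(R)$, that witnesses from $(R^{\Gamma})^{\circ}$ descend) survive, but the free-basis and Matlis-duality remarks do not.

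The fatal one: the step you yourself flag as the real difficulty --- injectivity of the Frobenius on $Q=H^d_{\fm}(S)/(0^*_{H^d_{\fm}(R)}\otimes_R S)$ --- cannot be outsourced to a generic citation of \cite{HH94} and \cite{EH08}; given your other steps it is \emph{equivalent} to the statement being proved (if the lemma holds then $Q=H^d_{\fm}(S)/0^*_{H^d_{\fm}(S)}$, on which $F$ is injective; conversely injectivity on $Q$ kills the $F$-nilpotent image of $V$ and yields the lemma). So the argument is circular at its only hard point. The paper closes exactly this gap by a different mechanism: a primary-decomposition/Mayer--Vietoris induction shows that $H^d_{\fm}(R)/0^*_{H^d_{\fm}(R)}$ splits $F$-equivariantly as a direct sum of the modules $H^d_{\fm}(R/\fp_i)/0^*_{H^d_{\fm}(R/\fp_i)}$, each of which is a \emph{simple} module with Frobenius action by Smith's theorem \cite{Sm97}; Ma's \cite[Lemma 5.8]{M14} guarantees that simplicity is preserved under the $\Gamma$-construction for $\Gamma$ sufficiently small; and comparing the number of simple factors of $H^d_{\fm}(R^{\Gamma})$ modulo $0^*_{H^d_{\fm}(R)}\otimes_R R^{\Gamma}$ and modulo $0^*_{H^d_{\fm}(R^{\Gamma})}$ forces the two submodules to coincide. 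Some input of this kind --- a statement that the tight-closure structure of $H^d_{\fm}$ is preserved for small $\Gamma$ --- is unavoidable, and your proposal does not supply it.
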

\begin{proof} It is obvious that we can assume $R$ is reduced. Set $(0) = \fp_1 \cap \cdots \cap \fp_n$ the primary decomposition of $(0)$. We prove by induction on $n$ the following claim.\\
\noindent {\bf Claim.} $H^d_{\fm}(R)/0^*_{H^d_{\fm}(R)} \cong H^d_{\fm}(R_1)/0^*_{H^d_{\fm}(R_1)} \oplus \cdots \oplus H^d_{\fm}(R_n)/0^*_{H^d_{\fm}(R_n)}$ as $R$-modules with Frobenius actions, where $R_i = R/\fp_i$ for all $1 \le i \le n$.\\
Indeed, we have nothing to do when $n =1$. Suppose $n>1$ and set $R' = R/(\fp_1 \cap \cdots \cap \fp_{n-1})$. We have the following short exact sequence
$$0 \to R \to R' \oplus R_n \to R'' \to 0,$$
where $R'' = R/(\fp_1 \cap \cdots \cap \fp_{n-1} + \fp_n)$. This short exact sequence induces the following exact sequence of local cohomology with homomorphisms are compatible with Frobenius actions
$$\cdots \to H^{d-1}_{\fm}(R'') \overset{\delta}{ \to } H^d_{\fm}(R) \to H^d_{\fm}(R') \oplus H^d_{\fm}(R_n) \to H^d_{\fm}(R'') = 0.$$
Note that $\dim R'' < d$, so $\mathrm{Im} (\delta)$ is an $F$-compatible submodule of $H^d_{\fm}(R)$ of dimension less than $d$, and so that $\mathrm{Im} (\delta) \subseteq 0^*_{H^d_{\fm}(R)}$. Therefore we obtain the following short exact sequence by restricting on tight closures
$$0 \to \mathrm{Im} (\delta) \to 0^*_{H^d_{\fm}(R)} \to 0^*_{H^d_{\fm}(R')} \oplus 0^*_{H^d_{\fm}(R_n)} \to 0.$$
Thus we have the following commutative diagram
$$
\begin{CD}
0 @>>> \mathrm{Im} (\delta)   @>>> 0^*_{H^d_{\fm}(R)} @>>> 0^*_{H^d_{\fm}(R')} \oplus 0^*_{H^d_{\fm}(R_n)} @>>> 0 \\
@. @VVV @VVV @VVV \\
0 @>>>\mathrm{Im} (\delta) @>>>  H^d_{\fm}(R) @>>> H^d_{\fm}(R') \oplus H^d_{\fm}(R_n) @>>>0.
\end{CD}
$$
Hence $H^d_{\fm}(R)/0^*_{H^d_{\fm}(R)} \cong H^d_{\fm}(R')/0^*_{H^d_{\fm}(R')} \oplus H^d_{\fm}(R_n)/0^*_{H^d_{\fm}(R_n)}$. The Claim now follows from the inductive hypothesis.\\

We continue to prove the lemma. Notice that for all $i \le n$, $H^d_{\fm}(R_i)/0^*_{H^d_{\fm}(R_i)}$ is simple by Smith's result. For any sufficiently small choice of $\Gamma$ cofinite in $\Lambda$ we have $R^{\Gamma}$ is reduced (so $\fp_i R^{\Gamma}$ is prime for all $i \le n$). We can even assume that $(H^d_{\fm}(R_i)/0^*_{H^d_{\fm}(R_i)}) \otimes_R R^{\Gamma}$ is simple as an $R^{\Gamma}$-module with a Frobenius action for all $i \le n$ by \cite[Lemma 5.8]{M14}. Thus
$$(H^d_{\fm}(R)/0^*_{H^d_{\fm}(R)})  \otimes_R R^{\Gamma} \cong H^d_{\fm}(R^{\Gamma})/(0^*_{H^d_{\fm}(R)}\otimes_R R^{\Gamma})$$ is a direct sum of $n$ simple $R^{\Gamma}$-modules with Frobenius actions. On the other hand $0^*_{H^d_{\fm}(R)}\otimes_R R^{\Gamma} \subseteq 0^*_{H^d_{\fm}(R^{\Gamma})}$ and $H^d_{\fm}(R^{\Gamma})/0^*_{H^d_{\fm}(R^{\Gamma})}$ is also a direct sum of $n$ simple $R^{\Gamma}$-modules with Frobenius actions by the Claim. Therefore $0^*_{H^d_{\fm}(R^{\Gamma})} = 0^*_{H^d_{\fm}(R)}\otimes_R R^{\Gamma} $. The proof is complete.
\end{proof}

\begin{lemma}\label{Gamma parameter} Let $(R, \fm, k)$ be an equidimensional complete local ring that is $F$-rational on the punctured spectrum. Then for any sufficiently small choice of $\Gamma$ cofinite in $\Lambda$, $(\fq R^{\Gamma})^* = \fq^* R^{\Gamma}$ for all parameter ideals $\fq$ of $R$.
\end{lemma}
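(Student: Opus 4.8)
The plan is to deduce the lemma from Lemma~\ref{Gamma local coho} by passing through the standard description of the tight closure of a parameter ideal on the top local cohomology module. Fix a system of parameters $x_1,\ldots,x_d$ of $R$ generating $\fq$ and let $\psi_R\colon R\to R/\fq\to H^d_{\fm}(R)$ be the composite of the quotient map with the structure map of the direct limit $H^d_{\fm}(R)=\varinjlim R/(x_1^n,\ldots,x_d^n)$, so $\ker\psi_R=\fq^{\lim}$ by Remark~\ref{limitclosure}(1). The key input I would use is that, for any excellent equidimensional local ring $(S,\fn)$ of dimension $d$ and any parameter ideal $\fb=(y_1,\ldots,y_d)$ of $S$, one has $\fb^*=\psi_S^{-1}\!\left(0^*_{H^d_{\fn}(S)}\right)$. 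The inclusion $\subseteq$ is immediate from the definitions and the description of the Frobenius action on $H^d_{\fn}(S)$; the inclusion $\supseteq$ is essentially Remark~\ref{direct system}(1) (the transition maps in $\varinjlim (y_1^n,\ldots,y_d^n)^*/(y_1^n,\ldots,y_d^n)^{\lim}\cong 0^*_{H^d_{\fn}(S)}$ are injective) together with the containment $\left((y_1^m,\ldots,y_d^m)^*:_S(y_1\cdots y_d)^{m-1}\right)\subseteq\fb^*$ for every $m$, which one obtains by applying a test element twice and using the definition of limit closure (compare \cite{Sm97}).

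With this in hand I would argue as follows. Since $R^{\Gamma}$ is $F$-finite it is excellent \cite{Ku76}, and it is equidimensional because $R\to R^{\Gamma}$ is flat local with zero-dimensional (in fact, field) closed fiber; moreover $x_1,\ldots,x_d$ is still a system of parameters of $R^{\Gamma}$ generating $\fq R^{\Gamma}$. Faithful flatness of $R\to R^{\Gamma}$, together with the fact that $\fm R^{\Gamma}$ is the maximal ideal of $R^{\Gamma}$, gives $H^d_{\fm}(R^{\Gamma})\cong H^d_{\fm}(R)\otimes_R R^{\Gamma}$, and the canonical map $\psi_{R^{\Gamma}}$ is thereby identified with $\psi_R\otimes_R R^{\Gamma}$ (both arise from the $n=1$ term of $\varinjlim R/(x_1^n,\ldots,x_d^n)$, whose formation commutes with $\otimes_R R^{\Gamma}$ since $x_1,\ldots,x_d$ is a system of parameters of $R^{\Gamma}$). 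Choosing $\Gamma$ cofinite in $\Lambda$ small enough for Lemma~\ref{Gamma local coho} to apply, we also have $0^*_{H^d_{\fm}(R^{\Gamma})}=0^*_{H^d_{\fm}(R)}\otimes_R R^{\Gamma}$ as a submodule of $H^d_{\fm}(R)\otimes_R R^{\Gamma}$.

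Applying the description of parameter tight closure to $R^{\Gamma}$ and $\fq R^{\Gamma}$ then gives
$$(\fq R^{\Gamma})^* \;=\; \psi_{R^{\Gamma}}^{-1}\!\left(0^*_{H^d_{\fm}(R^{\Gamma})}\right) \;=\; (\psi_R\otimes_R R^{\Gamma})^{-1}\!\left(0^*_{H^d_{\fm}(R)}\otimes_R R^{\Gamma}\right).$$
Because $R^{\Gamma}$ is $R$-flat, tensoring the exact sequence $0\to\psi_R^{-1}(0^*_{H^d_{\fm}(R)})\to R\to H^d_{\fm}(R)/0^*_{H^d_{\fm}(R)}$ with $R^{\Gamma}$ shows that formation of this preimage commutes with the base change, so the right-hand side equals $\psi_R^{-1}\!\left(0^*_{H^d_{\fm}(R)}\right)\otimes_R R^{\Gamma}=\fq^*\otimes_R R^{\Gamma}=\fq^* R^{\Gamma}$, using the description of $\fq^*$ over $R$ and flatness. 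This gives $(\fq R^{\Gamma})^*=\fq^* R^{\Gamma}$ for every parameter ideal $\fq$.

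The step I expect to be the main obstacle is having the identity $\fb^*=\psi_S^{-1}(0^*_{H^d_{\fn}(S)})$ available in the stated generality, since $R$ (hence $R^{\Gamma}$) is not assumed reduced. When $R$, and $R^{\Gamma}$ for small $\Gamma$, is reduced this is classical, via the existence of a test element \cite{HH94}. In general one first reduces modulo the nilradical, observing that the nilradical is contained in $\fq^*$, in each $(x_1^n,\ldots,x_d^n)^*$, and in $\fq^{\lim}$, and that neither $H^d_{\fm}(-)$ nor the passage to $R^{\Gamma}$ (for small $\Gamma$) is disturbed by this reduction; what then remains is the routine bookkeeping of choosing a single cofinite $\Gamma$ small enough for all of the above simultaneously, which is precisely the flexibility built into the $\Gamma$-construction.
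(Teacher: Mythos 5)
Your proposal is correct, but it takes a genuinely different route from the paper's. The paper counts lengths: using Lemmas \ref{Gamma F-rational} and \ref{Gamma local coho} plus faithful flatness, $R^{\Gamma}$ satisfies the hypotheses of Proposition \ref{upper above} with the same invariants $\ell(H^i_{\fm}(-))$ and $\ell(0^*_{H^d_{\fm}(-)})$ as $R$; for a deep power $\fq_k=(x_1^k,\dots,x_d^k)$ both $\ell_R(\fq_k^*/\fq_k)$ and $\ell_{R^{\Gamma}}((\fq_k R^{\Gamma})^*/\fq_k R^{\Gamma})$ attain the common maximal value, so the a priori inclusion $\fq_k^*R^{\Gamma}\subseteq(\fq_k R^{\Gamma})^*$ is forced to be an equality; the case of an arbitrary $\fq$ then follows from the colon formula $\fq^*=\fq_k^*:_R(x_1\cdots x_d)^{k-1}$ of \cite{FH11} together with flatness. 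You instead run everything through the identity $\fq^*=\psi_R^{-1}\bigl(0^*_{H^d_{\fm}(R)}\bigr)$ and base-change the whole diagram along the flat map $R\to R^{\Gamma}$, so that Lemma \ref{Gamma local coho} alone does the work. Your argument is leaner in its dependencies (no Lemma \ref{Gamma F-rational}, no Proposition \ref{upper above}, no \cite{FH11}) and actually proves the conclusion without using $F$-rationality on the punctured spectrum except to control the nilradical; the price is that you must invoke the existence of test elements for excellent reduced local rings \cite{HH94} to get the nontrivial inclusion $\psi_R^{-1}(0^*_{H^d_{\fm}(R)})\subseteq\fq^*$, a statement the paper never needs in this pointwise form (Remark \ref{direct system}(1) records only its direct-limit avatar), whereas the paper's counting argument stays entirely inside the length-bound framework it has already built. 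Your reduction to the reduced case is fine here because $F$-rationality off $\fm$ forces the nilradical $N$ to have finite length, so $H^d_{\fm}(R)\cong H^d_{\fm}(R/N)$ and $N R^{\Gamma}$ is the nilradical of $R^{\Gamma}$ for $\Gamma$ small; but note this is the one place your argument genuinely uses the punctured-spectrum hypothesis, so the added generality is slightly less than it first appears.
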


\begin{proof} By Lemmas \ref{Gamma F-rational} and \ref{Gamma local coho}, $R^{\Gamma}$ is $F$-rational on the punctured spectrum and $0^*_{H^d_{\fm}(R^{\Gamma})} = 0^*_{H^d_{\fm}(R)}\otimes_R R^{\Gamma} $ for any sufficiently small choice of $\Gamma$ cofinite in $\Lambda$. Since $R \to R^{\Gamma}$ is faithfully flat and $\fm R^{\Gamma}$ is the maximal ideal of $R^{\Gamma}$, we have $\ell_R(H^i_{\fm}(R)) = \ell_{R^{\Gamma}}(H^i_{\fm}(R^{\Gamma}))$ for all $i \le d-1$, and
$$\ell_R(0^*_{H^d_{\fm}(R)}) = \ell_{R^{\Gamma}}(0^*_{H^d_{\fm}(R^{\Gamma})}).$$
Let $\fq = (x_1, \ldots, x_d)$ be any parameter ideal of $R$. By Proposition \ref{upper above} there exists a positive integer $k$ such that
$$\ell_R(\fq_k^* / \fq_k) = \sum_{i=0}^{d-1}\binom{d}{i} \ell_R (H^i_{\fm}(R)) + \ell_R (0^*_{H^d_{\fm}(R)}),$$
where $\fq_k = (x_1^k, \ldots, x_d^k)$ is a parameter ideal of $R$. Since $\fq_k^* R^{\Gamma} \subseteq (\fq_k R^{\Gamma})^*$ and $R \to R^{\Gamma}$ is faithfully flat, we have $\ell_{R^{\Gamma}}((\fq_k R^{\Gamma})^*/\fq_k R^{\Gamma}) \ge \ell_R(\fq_k^* / \fq_k)$. Therefore
$$\ell_{R^{\Gamma}}((\fq_k R^{\Gamma})^*/\fq_k R^{\Gamma}) \ge \sum_{i=0}^{d-1}\binom{d}{i} \ell_{R^{\Gamma}} (H^i_{\fm}(R^{\Gamma})) + \ell_{R^{\Gamma}} (0^*_{H^d_{\fm}(R^{\Gamma})}).$$
By Proposition \ref{upper above} again we have
$$\ell_{R^{\Gamma}}((\fq_k R^{\Gamma})^*/\fq_k R^{\Gamma}) = \sum_{i=0}^{d-1}\binom{d}{i} \ell_{R^{\Gamma}} (H^i_{\fm}(R^{\Gamma})) + \ell_{R^{\Gamma}} (0^*_{H^d_{\fm}(R^{\Gamma})}).$$
Hence $\ell_{R^{\Gamma}}((\fq_k R^{\Gamma})^*/\fq_k R^{\Gamma}) = \ell_R(\fq_k^* / \fq_k)$, and so $(\fq_k R^{\Gamma})^* = \fq_k^* R^{\Gamma}$.\\
We next show that $(\fq R^{\Gamma})^* = \fq^* R^{\Gamma}$. Set $x = x_1 \ldots x_d$. By \cite[Proposition 3.3]{FH11} we have $\fq^* = \fq_k^* :_R x^{k-1}$ and $(\fq R^{\Gamma})^* =(\fq_k R^{\Gamma})^*:_{R^{\Gamma}} x^{k-1}$. Moreover $(\fq_k R^{\Gamma})^* = \fq_k^* R^{\Gamma}$ and $R \to R^{\Gamma}$ is a flat extension, so we obtain $(\fq R^{\Gamma})^* = \fq^* R^{\Gamma}$. This completes the proof.
\end{proof}

We now prove the main result of this paper.

\begin{proof}[Proof of the main theorem] Suppose $R$ is an $F$-injective local ring that is $F$-rational on the punctured spectrum. We will show that $\ell_R (\fq^* / \fq)$ does not depend on the choice of parameter ideal $\fq$. It is not hard to see that the $\fm$-adic complement $\widehat{R}$ is also an $F$-injective local ring that is $F$-rational on the punctured spectrum. Moreover $\ell_R (\fq^* / \fq) = \ell_{\widehat{R}} ((\fq \widehat{R})^* / \fq \widehat{R})$ (cf. \cite[Exercise 4.1]{H96}). Therefore we can assume henceforth that $R$ is complete. Let $\fq$ be any parameter ideal of $R$. By Proposition \ref{upper above} we need only to show that
$$\ell_R (\fq^* / \fq) = \sum_{i=0}^{d-1}\binom{d}{i} \ell_R (H^i_{\fm}(R)) + \ell_R (0^*_{H^d_{\fm}(R)}).$$
Taking a sufficiently small choice $\Gamma$ cofinite in $\Lambda$ satisfying Lemmas \ref{Gamma injec}, \ref{Gamma F-rational}, \ref{Gamma local coho}, and \ref{Gamma parameter}. We have that $R^{\Gamma}$ is an $F$-injective local ring that is $F$-rational on the punctured spectrum and $\fq^* R^{\Gamma} = (\fq R^{\Gamma})^*$ for all parameter ideals $\fq$. Since $R^{\Gamma}$ is $F$-finite,
$$\ell_{R^{\Gamma}}((\fq R^{\Gamma})^*/\fq R^{\Gamma}) = \sum_{i=0}^{d-1}\binom{d}{i} \ell_{R^{\Gamma}} (H^i_{\fm}(R^{\Gamma})) + \ell_{R^{\Gamma}} (0^*_{H^d_{\fm}(R^{\Gamma})})$$
by Theorem \ref{F-finite case}. However $\ell_{R^{\Gamma}}((\fq R^{\Gamma})^*/\fq R^{\Gamma}) = \ell_R(\fq^* / \fq)$, so
$$\ell_R (\fq^* / \fq) = \sum_{i=0}^{d-1}\binom{d}{i} \ell_R (H^i_{\fm}(R)) + \ell_R (0^*_{H^d_{\fm}(R)}).$$
The proof is complete.
\end{proof}

\section{Open Questions}
Recall that the {\it parameter test ideal} of $R$ is $\tau^R = \cap_{\fq} (\fq:_R \fq^*)$, where $\fq$ runs over all parameter ideals. Notice that $\tau^R$ is an $\fm$-primary ideal if and only if $R$ is $F$-rational on the punctured spectrum (so $R$ is generalized Cohen-Macaulay). Moreover any parameter ideal contained in $\tau^R$ is standard by \cite[Remark 5.11]{Hu98}. Based on Proposition \ref{upper above} we have the following natural question\footnote{I thank Professor Kei-ichi Watanabe for this question.}.
\begin{question} Let $(R, \fm, k)$ be an equidimensional excellent local ring that is $F$-rational on the punctured spectrum. Is it true that for every parameter ideal $\fq$ is contained in $\tau^R$ we have
$$\ell_R (\fq^* / \fq) = \sum_{i=0}^{d-1}\binom{d}{i} \ell_R (H^i_{\fm}(R)) + \ell_R (0^*_{H^d_{\fm}(R)}).$$
\end{question}

Let $A$ be an Artinian $R$-module with a Frobenius action $F: A \to A$. Then we define the {\it Frobenius closure} $0^F_A$ of the zero submodule of $A$ is the submodule of $A$ consisting all element $z$ such that $F^e(z) = 0$ for some $e \ge 0$. $0^F_A$ is the nilpotent part of $A$ by the Frobenius action. By \cite[Proposition 1.11]{HS77} and \cite[Proposition 4.4]{L97} there exists a non-negative integer $e$ such that $0^F_A = \ker (A \overset{F^e}{\longrightarrow} A)$ for all $i \ge 0$ (see also \cite{Sh07}). The smallest of such integers is called the {\it Harshorne-Speiser-Lyubeznik} number of $A$ and denoted by $HSL(A)$. We define the {\it Harshorne-Speiser-Lyubeznik} number of a local ring $(R, \frak m)$ as follows \footnote{The Harshorne-Speiser-Lyubeznik number of $R$ is often defined in terms only the top local cohomology module $H^d_{\frak m}(R)$. However, we think that it should be defined by all local cohomology modules. Moreover, our's definition is more suitable with $F$-singularities. Indeed, it is clear that $R$ is $F$-injective if and only if $HSL(R) = 0$.}
$$HSL(R): = \min \{ e \mid   0^F_{H^i_{\fm}(R)} =   \ker (H^i_{\fm}(R) \overset{F^e}{\longrightarrow} H^i_{\fm}(R)) \text{ for all } i = 0, \ldots, d\}.$$
 The $HSL(R)$ is closely related with the {\it Frobenius test exponent} of parameter ideals (see \cite{HKSY06, KS06}). Recall that the Frobenius test exponent of $R$, here we denote by $Fte(R)$, is the smallest non-negative integer $e$ such that $(\frak q^F)^{[p^e]} = \frak q^{[p^e]}$ for all parameter ideals $\frak q$, and $Fte(R) = \infty$ if we have no such $e$ \footnote{If $R$ is Cohen-Macaulay, then Katman and Sharp \cite{KS06} showed that $Fte(R)$ is just $HSL(R)$. In this paper, they posed the question that whether $Fte(R)$ is an integer for any local ring $R$. This question holds true for generalized Cohen-Macaulay rings, but it is still open in general. Recently, the author proved that $Fte(R) \ge HLS(R)$ for any local ring $R$.}.  The relation between $0^F_{H^i_{\fm}(R)}$ and $\fq^F$ appears explicit in \cite{QS17}. In more details, if $\fq^{F} = \fq$ for all parameter ideals $q$ then $0^F_{H^i_{\fm}(R)} = 0$ for all $i \ge 0$, i.e. $R$ is $F$-injective. Although the converse is not true for non-equidimensinal local rings, we believe the following question has an affirmative answer.

\begin{question} Let $(R, \fm, k)$ be an equidimensional excellent local ring of characteristic $p>0$. Then is it true that $\fq^{F} = \fq$ for all parameter ideals $\fq$, i.e. $Fte(R)=0$, if and only if $0^F_{H^i_{\fm}(R)} = 0$ for all $i \ge 0$, i.e. $HSL(R) = 0$.
\end{question}

Notice that Ma \cite{M15} gave a positive answer for the above question for generalized Cohen-Macaulay rings. Suppose $R$ is an excellent equidimension local ring that is $F$-injective on the punctured spectrum. Then we can check that $\ell_R \,(0^F_{H^i_{\fm}(R)}) < \infty$ for all $i \ge 0$. Inspired by the main result of this paper, we ask the following question.
\begin{question} Let $(R, \fm, k)$ be an excellent generalized Cohen-Macaulay local ring that is $F$-injective on the punctured spectrum. Is it true that
$$\ell_R (\fq^F / \fq) \le \sum_{i=0}^{d}\binom{d}{i} \ell_R (0^F_{H^i_{\fm}(R)})$$
for all parameter ideals $\fq$.
\end{question}
If $R$ is a generalized Cohen-Macaulay local ring (of characteristic $p>0$), then some power of $\fm$ is a standard ideal of $R$ (cf. Remark \ref{standard ideal} (2)). Hence there exists a non-negative integer $e$ such that every parameter ideal of $\fm^{[p^e]}$ is standard. This condition is equivalent to the condition that $F_*^e(R)$ is a Buchsbaum $R$-module provided $R$ is $F$-finite. I thank Nguyen Cong Minh for the following question.
\begin{question}
  Let $(R, \fm, k)$ be a generalized Cohen-Macaulay local ring of characteristic $p>0$. Does there exist an integer $e$ (that is bounded above by the Frobenius invariants of $R$ such as $HSL(R)$ and $Fte(R)$) such that $\fm^{[p^e]}$ is a standard ideal of $R$.
\end{question}

\end{document}